\newtheorem{theorem}{Theorem}
\newtheorem{lemma}{Lemma}
\newtheorem{definition}{Definition}
\newtheorem{assumption}{Assumption}
\numberwithin{equation}{section}
\newcommand{\grad}[1]{{\bm{\nabla}{#1}}}
\newcommand{\dv}[1]{\bm{\nabla\cdot{#1}}}
\newcommand{\pfrac}[2]{\frac{\partial{#1}}{\partial{#2}}}
\newcommand{\dx}[1]{\hspace{0.75mm}\mathrm{d}{#1}}
\newcommand{\curlybrackets}[1]{\left\lbrace{#1}\right\rbrace}
\newcommand{\newR}[1]{\textcolor{black}{{#1}}}
\newcommand{\newB}[1]{\textcolor{black}{{#1}}}
\newcommand{\newP}[1]{\textcolor{black}{{#1}}}
\begin{document}
\title{The `Recovered Space' Advection Scheme for Lowest-Order Compatible Finite Element Methods}
\author{Thomas M. Bendall, Colin J. Cotter, Jemma Shipton}
\maketitle

\begin{abstract}
\noindent
We present a new compatible finite element advection scheme for the compressible Euler equations.
Unlike the discretisations described in Cotter and Kuzmin (2016) and Shipton et al (2018), 
the discretisation uses the lowest-order family of compatible finite element spaces, 
but still retains second-order numerical accuracy.
This scheme obtains this second-order accuracy by first `recovering' the function in higher-order spaces,
before using the discontinuous Galerkin advection schemes of Cotter and Kuzmin (2016).
As well as describing the scheme, we also present its stability properties and a strategy for ensuring boundedness.
We then demonstrate its properties through some numerical tests, before presenting its use within a model solving the compressible Euler equations.
\end{abstract}

\noindent \small{\textit{Keywords}:
Advection scheme;
Discontinuous Galerkin;
Compatible finite element methods;
Numerical weather prediction} 
\section{Introduction}
\label{sec:intro}
Over the past few decades, technological improvements in parallel computing 
have driven increased performance of numerical models of weather and 
climate systems, allowing them to be run at increasingly finer resolutions
and delivering significantly improved predictive capabilities.
However, the traditional latitude-longitude grids used in these models
are leading to the approach of a scalability bottleneck:
improvements in potential computational power no longer lead to
improved model performance, as the clustering of points around the poles
limits the rate of data communication (the so-called `pole problem').
This has led to a search for alternative grids for these models. \\
\\
However, latitude-longitude grids (coupled with Arakawa C-grid staggering)
provided many properties important for accurate representation of
atmospheric motion: avoiding spurious pressure modes, accurately 
representing geostrophic balance and not supporting spurious gravity-inertia
or Rossby wave modes.
These properties are described in \cite{staniforth2012horizontal}.
It is therefore desirable that any alternative grid should 
maintain these properties. \\
\\
Finite element methods have therefore been explored, as they offer
the opportunity to solve the equations of motion with arbitrary
mesh structures.
Extending previous work into the use of mixed finite element methods
for such geophysical fluid models, \cite{cotter2012mixed} proposed
mixed finite element methods with two crucial properties.
Firstly, that the finite element spaces used should be \textit{compatible}
with one another, so that the discrete versions of the vector calculus
operators preserve the properties that $\mathrm{div}(\mathrm{curl})=0$
and that $\mathrm{curl}(\mathrm{grad})=0$. \\
\\
Secondly, the function spaces used for the velocity and pressure variables
should be chosen to fix the ratio of velocity to pressure degrees of freedom
per element to be 2:1.
This was shown in \cite{cotter2012mixed} to be a necessary condition for the
avoidance of spurious inertia-gravity and Rossby wave modes.
\cite{cotter2012mixed} also proposed two sets of finite element spaces
that would meet these criteria on quadrilateral and triangular elements. \\
\\
\cite{cotter2012mixed} has been followed by a series of works into the use
of compatible finite element methods for use in numerical weather models,
for instance \cite{staniforth2013analysis}, \cite{cotter2014finite}, \cite{mcrae2014energy},
\cite{natale2016compatible} and \cite{shipton2018higher}.
Most relevant to the work here are \cite{yamazaki2017vertical}, which presented a full discretisation
of the Euler-Boussinesq equations in the context of compatible finite element methods;
and \cite{cotter2016embedded}, which introduced a 
embedded discontinuous Galerkin transport scheme. \\
\\
The compatible finite element framework thus suggests using particular
compatible finite element spaces for the velocity $\bm{v}$ and density $\rho$.
For horizontal grids, the examples given in \cite{cotter2012mixed} are
$\bm{v}\in \mathrm{RT}_{k+1}$ with $\rho\in \mathrm{Q}_k$ on quadrilateral elements,
and $\bm{v}\in \mathrm{BDFM}_k$ with $\rho\in \mathrm{P}_k$ on triangular elements.
Here $\mathrm{RT}_{k}$ is the space of $k$-th degree Raviart-Thomas quadrilateral elements,
$Q_k$ is the space of discontinuous $k$-th order polynomial elements on quadrilaterals,
$\mathrm{BDFM}_k$ is the space of $k$-th degree Brezzi-Douglas-Fortin-Marini triangular elements
and finally $\mathrm{P}_k$ is the space of $k$-th order polynomial elements on triangles.
Most of these elements appear in the Periodic Table of Finite Elements \cite{arnold2014periodic}. \\
Another set of spaces to consider are those used in 2D vertical slices, which are relevant for test cases
used in developing the model.
These models are constructed as the tensor product $U\times V$ of a 1D horizontal space $U$ with a
1D vertical space $V$.
We will denote the space of $k$-th degree continuous polynomial elements by $\mathrm{CG}_k$,
and that of $l$-th degree discontinuous polynomials by $\mathrm{DG}_l$.
In this case horizontal velocities lie in the space $\mathrm{CG}_{k+1}\times \mathrm{DG}_{l}$,
vertical velocities are in $\mathrm{DG}_k\times \mathrm{CG}_{l+1}$, whilst the density is in
$\mathrm{DG}_k\times \mathrm{DG}_l$.
In order to mimic the Charney-Philips grid used in 
finite difference models, the potential temperature $\theta$
lies in the partially-continuous space,
$\mathrm{DG}_k \times \mathrm{CG}_{l+1}$, 
i.e. temperature degrees of freedom (DOFs) are co-located with
those for vertical velocity, as in \cite{natale2016compatible} 
and \cite{yamazaki2017vertical}.
The construction of tensor product finite element spaces is described by \cite{mcrae2016automated}.
Throughout the rest of the paper we will consider the case that $k=l$ for quadrilateral elements in this vertical slice set-up, although the advection scheme can be applied more generally. \\
\\
Advection schemes for the $k=1$ family of spaces have been presented in \cite{cotter2016embedded},
\cite{yamazaki2017vertical} and \cite{shipton2018higher}.
An advantage of the $k=1$ degree was that it is easy to
formulate for it advection schemes
with the property of
second-order numerical accuracy, i.e. that the error
associated with the discretisation of the advection
process is proportional to
$(\Delta x)^2$, the grid size squared.
This is one of the crucial properties of discretisations for 
numerical weather models
listed in \cite{staniforth2012horizontal}.
However, in the $k=1$ families, coupling of the dynamics to the
sub-grid physical processes
(for example the effects of moisture or radiation)
may be more challenging than for $k=0$ case.
The effects of such physical processes are commonly expressed
as tendencies to the prognostic variables and typically
calculated pointwise at the degrees of freedom.
For the fields that are piecewise constant or linear,
the pointwise values can be interpreted as mean quantities
for that element and the tendencies can be formulated as such.
In the $k=1$ case on quadrilateral elements, the temperature and moisture fields
are piecewise quadratic functions in the vertical,
and the physical interpretation of the values at the degrees
of freedom becomes less clear.
It is therefore desirable to consider $k=0$ spaces using
advection schemes that have second-order numerical accuracy.
The main result of this paper is thus a presentation of such an
advection scheme for this set of spaces. \\
\\
This scheme has been inspired by the results of 
\cite{georgoulis2018recovered}, which implies
that it is possible to reconstruct a discontinuous zeroth-order 
field in a continuous first-order space, via
an averaging operation that has second-order numerical
accuracy.\\
\\
After describing the scheme in Section \ref{sec:scheme}, this paper
presents several of its properties in Section \ref{sec:properties},
including a general argument of its stability and von Neumann
analysis of the scheme in three particular cases.
Section \ref{sec:numerics} presents the results of numerical 
tests demonstrating the second-order numerical accuracy of the 
scheme, the stability calculations of Section 
\ref{sec: von neumann} and the use of a limiter within
the advection scheme.
Finally, the use of the advection scheme within a model
of the compressible Euler equations is presented in Section 
\ref{sec: compressible model}.

\section{The `Recovered Space' Scheme}
\label{sec:scheme}
The key idea upon which this scheme is based is the family of
recovered finite element methods introduced by
\cite{georgoulis2018recovered}.  These methods combine features of
discontinuous Galerkin approaches with conforming finite element
methods.  \newR{They are similar to other recovery methods, such as those in
\cite{titarev2002ader,van2005discontinuous}, in that they reconstruct
higher-order polynomials from lower order data in a patch of cells.
They differ in that they do not attempt to reproduce polynomials of a
certain degree exactly.  Instead, they involve mapping discontinuous
finite element spaces to continuous ones, via recovery operators,
relying on analysis estimates of stability and accuracy.}
The scheme that we will introduce involves the use of one
of these operators to recover a function in a
discontinuous first-order space
from one in a discontinuous zeroth-order space.
To do this, we first recover a first-order continuous function 
from the zeroth-order discontinuous function using an averaging
operator described in \cite{georgoulis2018recovered} and
\cite{karakashian2007convergence}.
This operator finds the values for any degree of freedom
shared between elements in a continuous function space,
by averaging between the values of the surrounding degrees of
freedom from the discontinuous space. \\
\\
\newR{Once this operator has been applied, existing transport schemes
can be used to perform the advection upon the recovered field.
This approach is compatible when the transport equation is in
`advective' form
\begin{equation}
\pfrac{q}{t} + \bm{v}\bm{\cdot} \grad{}q = 0, \label{eqn:advective}
\end{equation}
or `conservative' form
\begin{equation}
\pfrac{q}{t} + \grad{}\bm{\cdot}(q\bm{v}) = 0, \label{eqn:conservative form}
\end{equation}
where $q$ is the quantity to be transported by velocity $\bm{v}$.
However most of our analysis will focus on the application of this scheme to
the `advective' form of the equation, under which the mass $\int_\Omega q \dx{x}$ 
over the whole domain $\Omega$ will only be necessarily conserved when
the flow is incompressible, $\dv{v}=0$.}

\subsection{The Scheme} 
First we will define a set of spaces that our functions will lie in. 
Let $V_0(\Omega)$ be the lowest-order finite element space in which the 
initial field lies, where $\Omega$ is our spatial domain\newR{\footnote{\newR{This spatial domain can be arbitrary, but with geophysical applications in mind we anticipate the scheme being used upon rectangular or cuboid domains (with or without periodicity) or spherical shells.
However the recovery operator that we consider in Section \ref{sec:recovery operator} is intended for use in flat spaces or with only scalar fields in curved spaces and we do not yet consider the application to transport of vector fields in curved spaces. 
Therefore in this work we will predominantly consider rectangular domains with a vertical coordinate, with rigid walls at the top and bottom edges.}}}.
$V_1(\Omega)$ is then the space of next degree, which will be fully discontinuous.
We also have that $V_0\subset V_1$.
$\tilde{V}_1(\Omega)$ is the fully continuous space of 
same degree as $V_1(\Omega)$,
whilst $\hat{V}_0(\Omega)$ is a 
broken (i.e. fully discontinuous) version of $V_0(\Omega)$.
In many cases, $\hat{V}_0(\Omega)$ and $V_0(\Omega)$ will coincide.\\
\\
We now define a series of operators to map between these spaces.

\begin{definition}
The recovery operator $\mathcal{R}$ acts upon a function in the 
initial space to make a function in the
continuous space of higher-degree, so that $\mathcal{R}:V_0\to\tilde{V}_1$.
The operator has second-order numerical accuracy.
\end{definition}

\begin{assumption} \label{assumption}
The recovery operator $\mathcal{R}$ has the property that for all $\rho_0\in V_0$,
there is some $C>0$ such that $||\mathcal{R}\rho_0||_{L^2} \leq C||\rho_0||_{L^2}$.
\end{assumption}

\begin{definition}
The injection operator $\mathcal{I}:V\to V_1$ identifies a function in $V_0$, $\hat{V}_0$ or $\tilde{V}_1$ as a 
member of $V_1$.
This must be numerically implemented, although it does nothing else mathematically.
\end{definition}

\begin{definition}
The projection operator
$\hat{\mathcal{P}}:\tilde{V}_1\to\hat{V}_0$, is defined to give 
$\hat{u}=\hat{\mathcal{P}}\tilde{v}$, from $\tilde{v}\in\tilde{V}_1$,
by finding the solution $\hat{u}\in \hat{V}_0$ to
\begin{equation}
\int_\Omega \hat{\psi} \hat{u} \dx{x} = 
\int_\Omega \hat{\psi} \tilde{v} \dx{x} ,  
\ \ \ \ \ \forall \hat{\psi}\in \hat{V}_0.
\end{equation}
\end{definition}

\begin{definition}
The advection operator $\mathcal{A}:V_1\to V_1$, \newR{represents the action
of performing one time step of a stable discretisation of the advection equation
(in either advective or conservative form) and has second-order numerical accuracy in space.}
\end{definition}

\begin{definition}
The projection operator $\mathcal{P}:V_1\to V_0$
will have two forms. 
The first, $\mathcal{P}_A$, is defined to give $u=\mathcal{P}_Av$
from $v\in V_1$, by finding the solution $u\in V_0$ to
\begin{equation}\label{eqn:projectionA}
\int_\Omega \psi u \dx{x} = 
\int_\Omega \psi v \dx{x} ,  
\ \ \ \ \ \forall \psi\in V_0,
\end{equation}
where $u\in V_0$ and $v\in V_1$. \\
\\
The second form, $\mathcal{P}_B$, is
composed of two operations: $\mathcal{P}_I:V_1\to\hat{V}_0$, interpolation
into the broken space by pointwise evaluation at degrees of freedom, and $\mathcal{P}_R:\hat{V}_0\to V_0$, recovery
from the broken space to the original space, restoring continuity
via the reconstruction operator from \cite{georgoulis2018recovered}.
$\mathcal{P}_B$ can thus be written as $\mathcal{P}_B = 
\mathcal{P}_R\mathcal{P}_I$.
\end{definition}
\noindent
In the case that $V_0$ is fully discontinuous, $\mathcal{P}_A$ and $\mathcal{P}_B$
will be identical operations.
However for fully or partially continuous $V_0$,
$\mathcal{P}_B$ prevents the formation of any new maxima and minima,
whereas $\mathcal{P}_A$ does not.
We may thus use $\mathcal{P}_B$ as the projection operator when trying to bound
the transport, such as for a moisture species.
Further discussion can be found in Section \ref{sec:limiting}.
The drawback is that whilst $\mathcal{P}_A$ preserves the mass
(setting $\psi=1$ gives $\int_{\Omega}u\dx{x}=\int_\Omega v\dx{x}$),
$\mathcal{P}_B$ does not necessarily do so.

\begin{definition}
The `recovered space' scheme then takes the function
$\rho_0^n\in V_0$ at the $n$-th time step and returns the function 
$\rho_0^{n+1}\in V_0 $ at the $(n+1)$-th time step by performing
the following series of operations:
\begin{equation}
\rho_0^{n+1} = \mathcal{PAI}
(\mathcal{R}-\hat{\mathcal{P}}\mathcal{R}+1)\rho_0^n,
\end{equation}
where $\mathcal{P}$ could be either $\mathcal{P}_A$ or $\mathcal{P}_B$.
\end{definition}
\noindent
An important property of this scheme is that in the absence of flow, the field being advected must remain unchanged.
In this case $\mathcal{A}$ will be the identity, and since 
$\mathcal{PIR} \equiv \mathcal{PI}\hat{\mathcal{P}}\mathcal{R}$, then
$\rho_0^{n+1}=\mathcal{PI}\rho^n_0 = \rho_0^n$.
\newB{In practice, mass will be only be conserved up to the precision used by the numerical solver for $\mathcal{P}$.}

\subsection{Example Spaces}\label{sec: example spaces}
In this section we give an example set of spaces 
$\lbrace V_0, V_1, \tilde{V}_1, \hat{V}_0\rbrace$ on quadrilateral elements
that can be used for this scheme,
in the context of 2D vertical slice problems.
The variables that we will consider are the density $\rho$, velocity $\bm{v}$ and
potential temperature $\theta$. \\
\begin{table}[h!]
\centering
\begin{tabular}{c|c|c|c|c}
Variable & $V_0$ &  $V_1$ & $\tilde{V}_1$ & $\hat{V}_0$ \\
\hline
& & & & \\
$\rho$ & $\mathrm{DG}_0\times \mathrm{DG}_0$ 
& 
$\mathrm{DG}_1\times \mathrm{DG}_1$ & $\mathrm{CG}_1\times \mathrm{CG}_1$ &
$\mathrm{DG}_0\times \mathrm{DG}_0$ \\
& \includegraphics[scale=0.02]{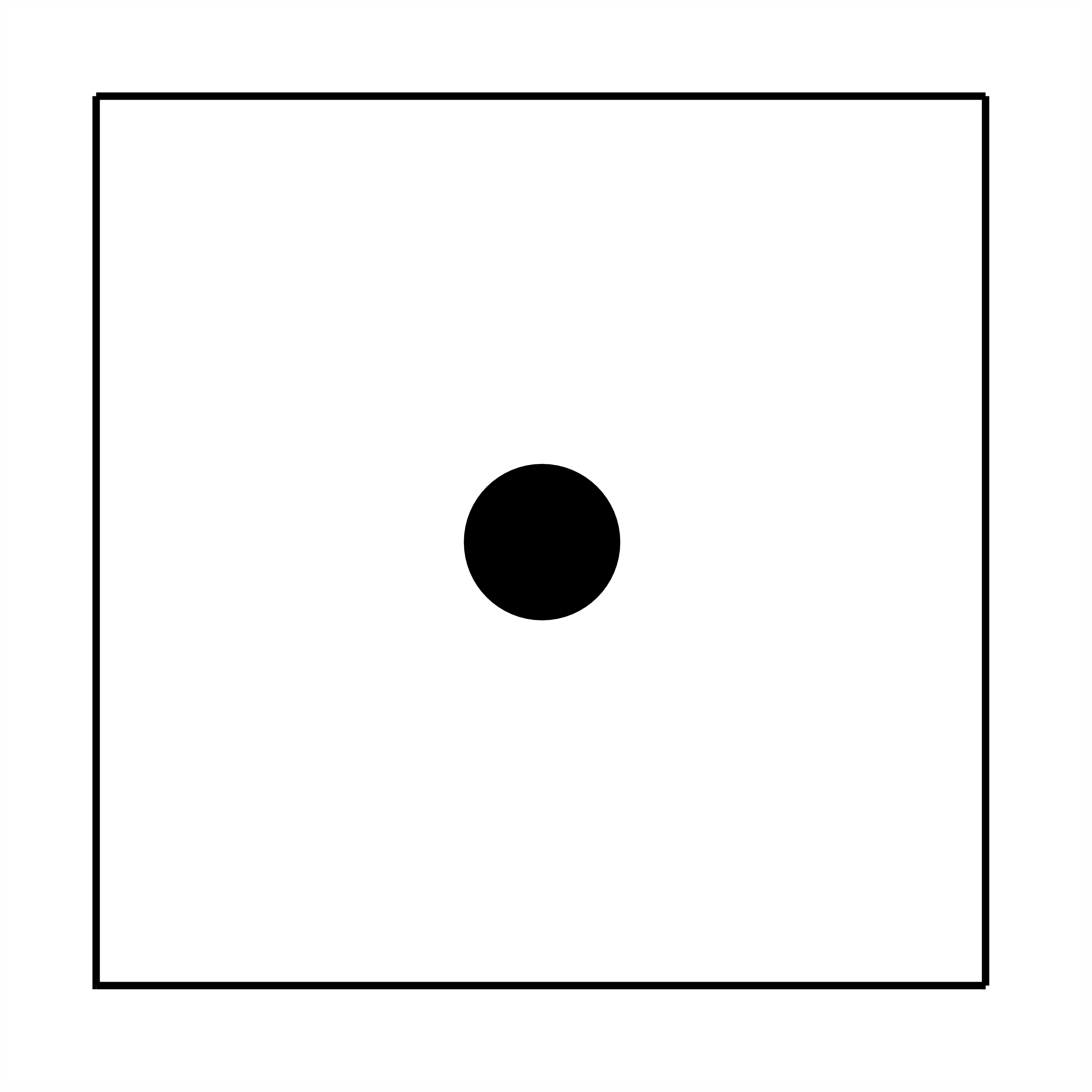}
& \includegraphics[scale=0.02]{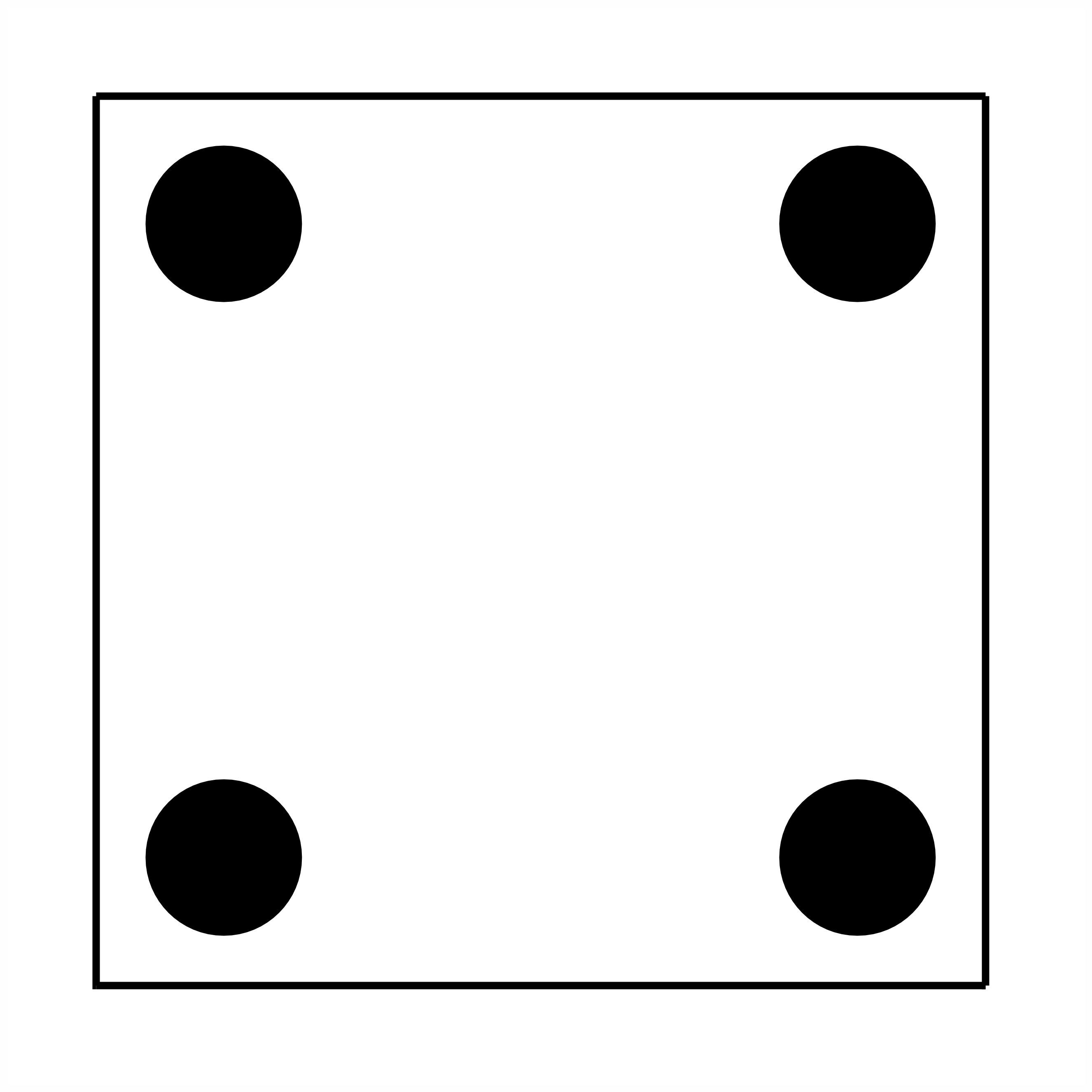}
& \includegraphics[scale=0.02]{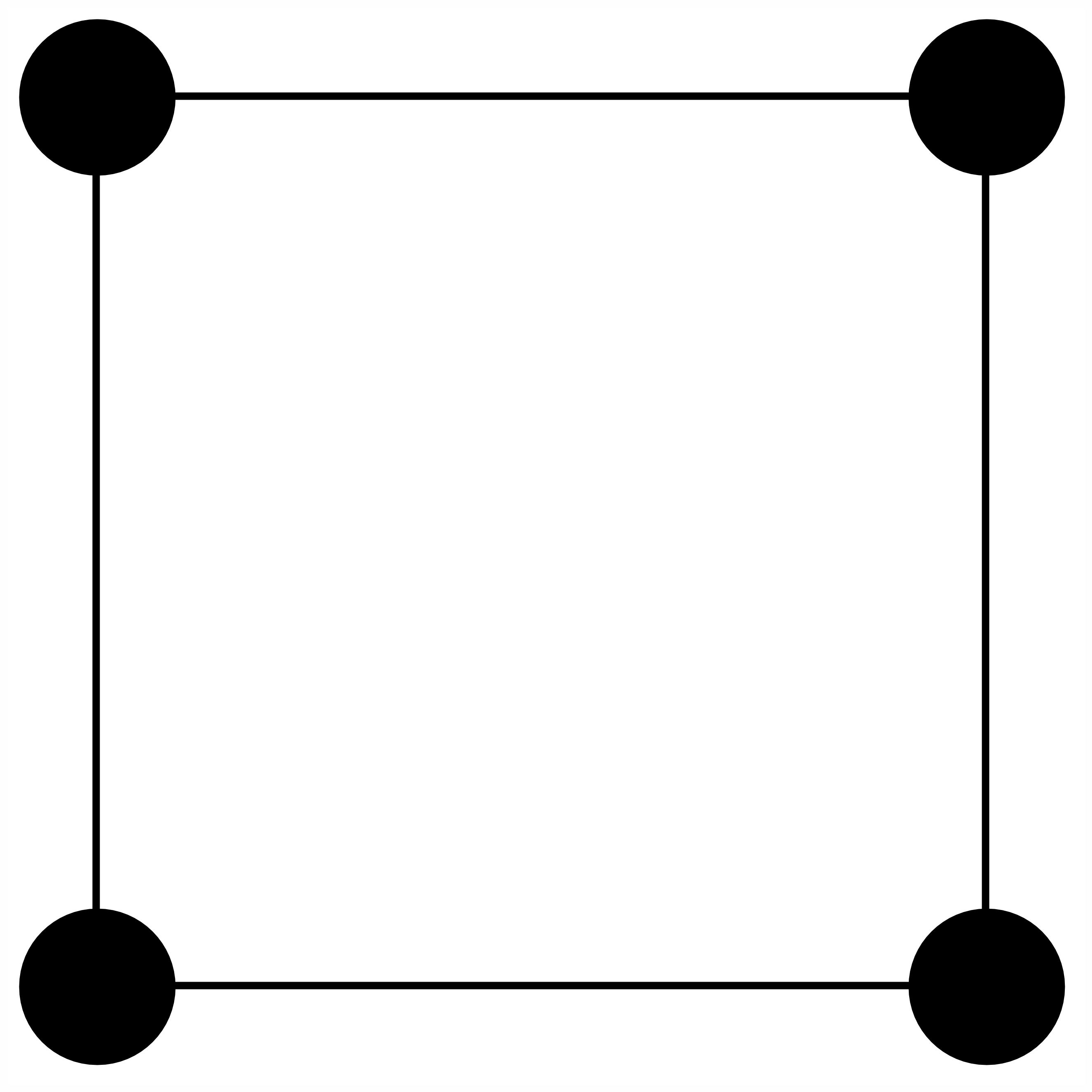}
& \includegraphics[scale=0.02]{DG0.png}
\\
\hline
& & & & \\
$\bm{v}$ & $\mathrm{RT}_0$ & 
Vector $\mathrm{DG}_1\times \mathrm{DG}_1$ & 
Vector $\mathrm{CG}_1\times \mathrm{CG}_1$ &
Broken $\mathrm{RT}_0$ \\
& \includegraphics[scale=0.02]{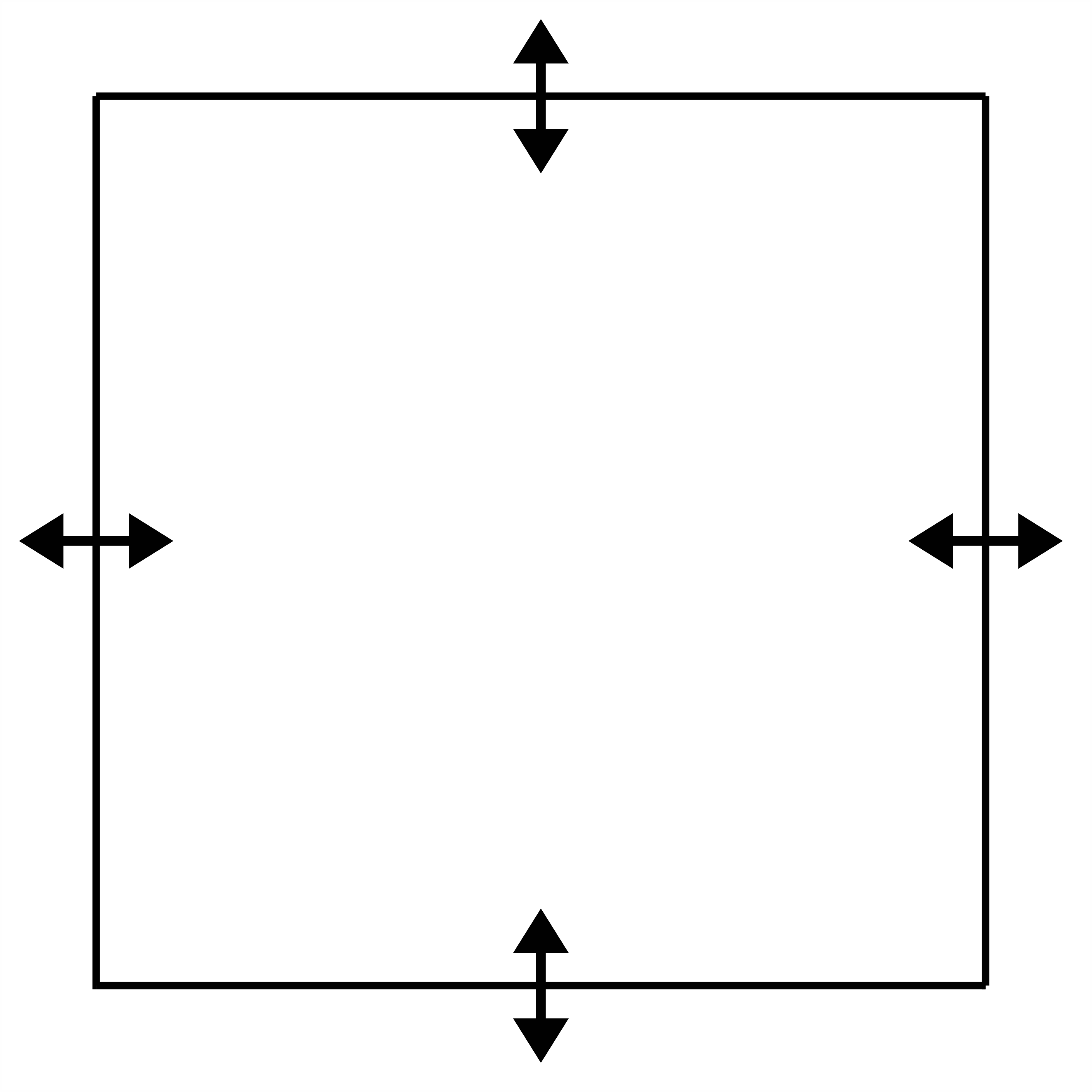}
& \includegraphics[scale=0.02]{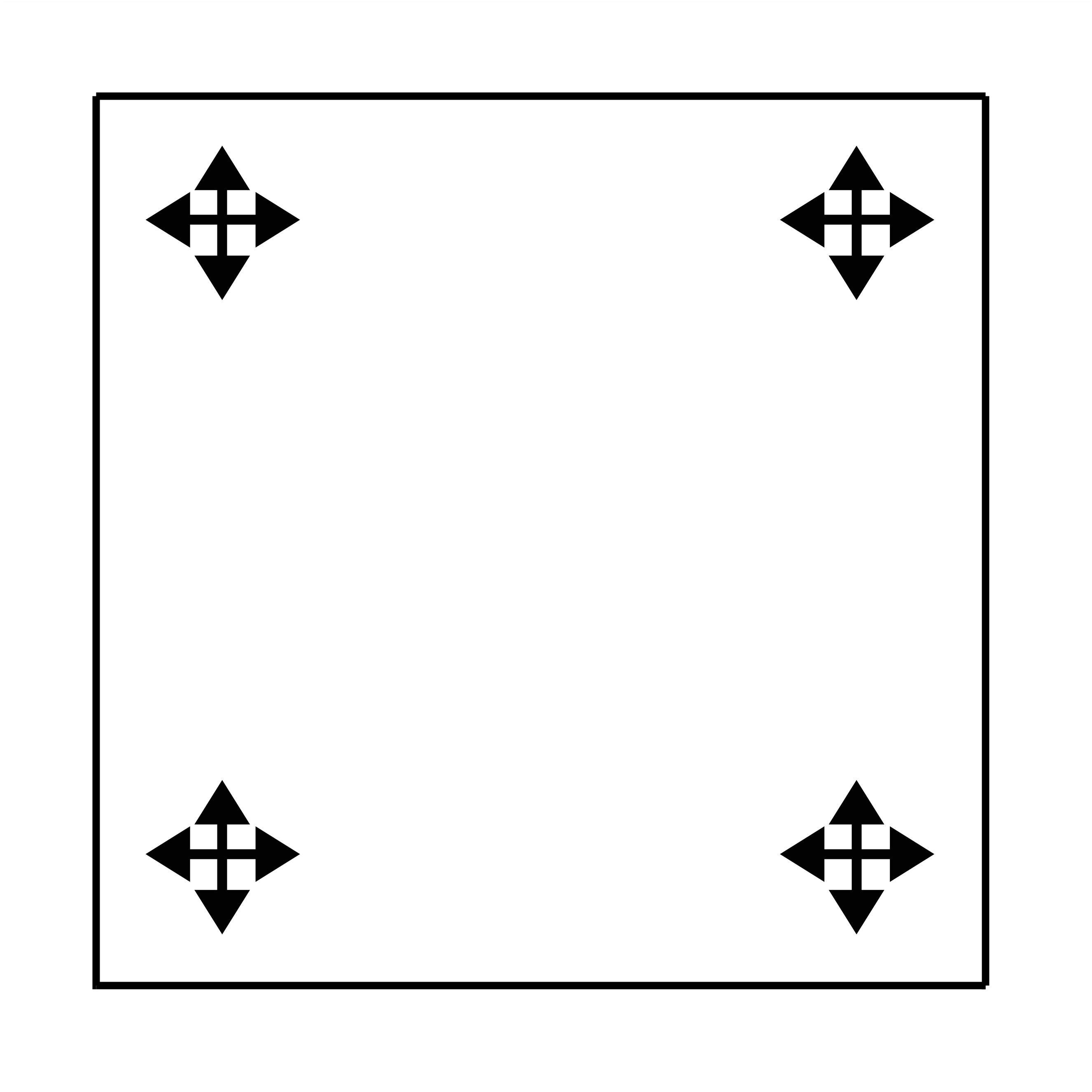}
& \includegraphics[scale=0.02]{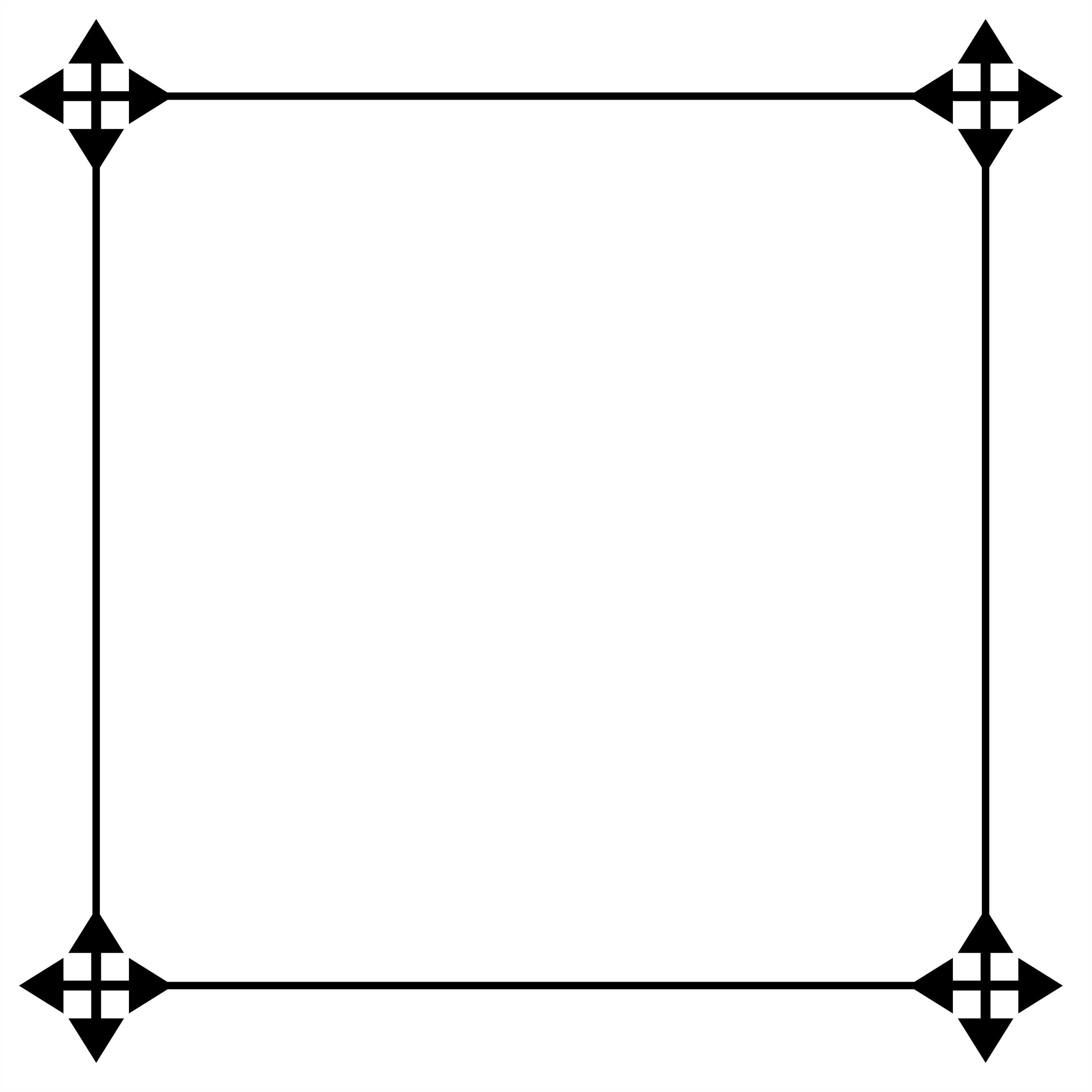}
& \includegraphics[scale=0.02]{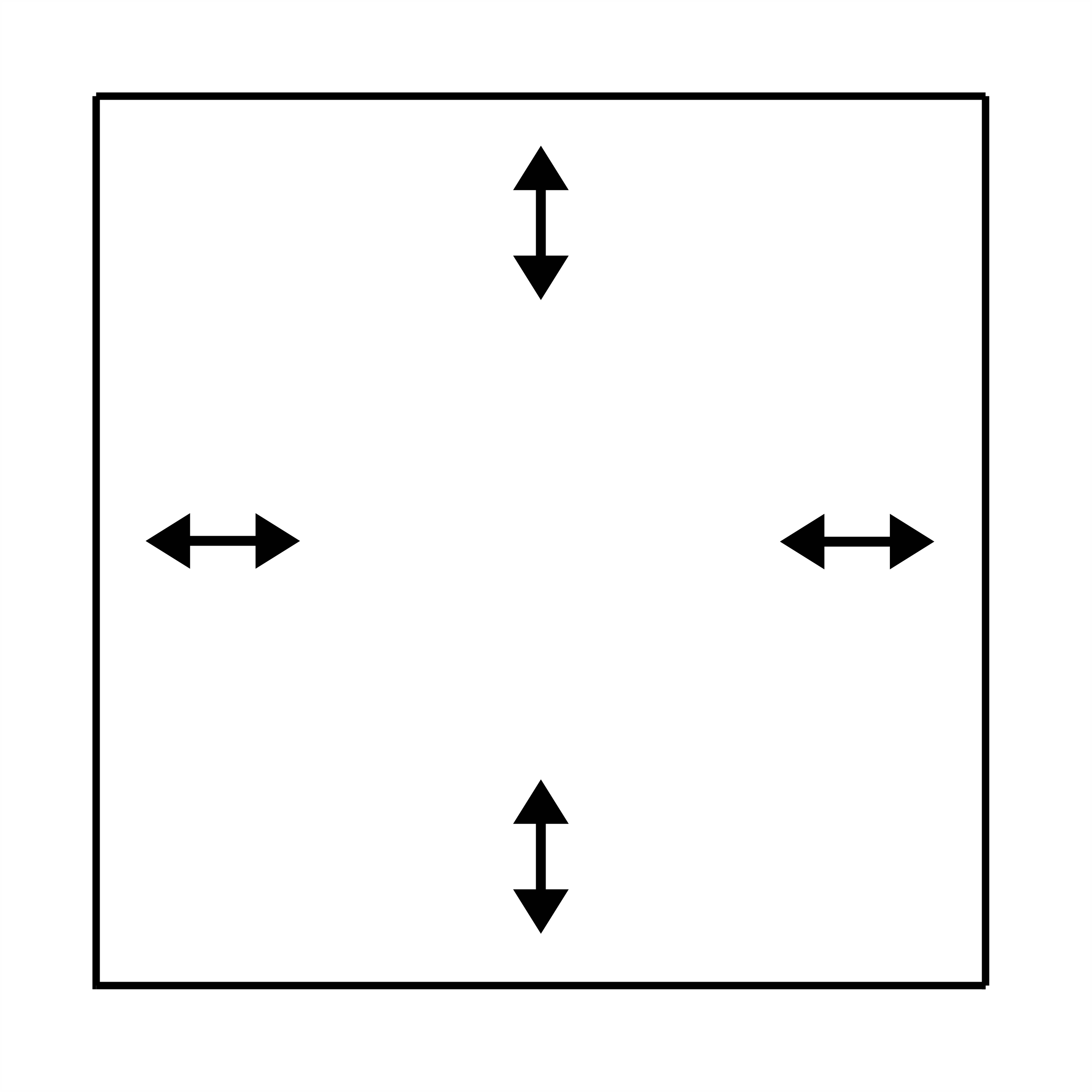}
\\
\hline
& & & & \\
$\theta$ & $\mathrm{DG}_0\times \mathrm{CG}_1$ & 
$\mathrm{DG}_1\times \mathrm{DG}_1$ & $\mathrm{CG}_1\times \mathrm{CG}_1$ &
$\mathrm{DG}_0\times \mathrm{DG}_1$ \\
& \includegraphics[scale=0.02]{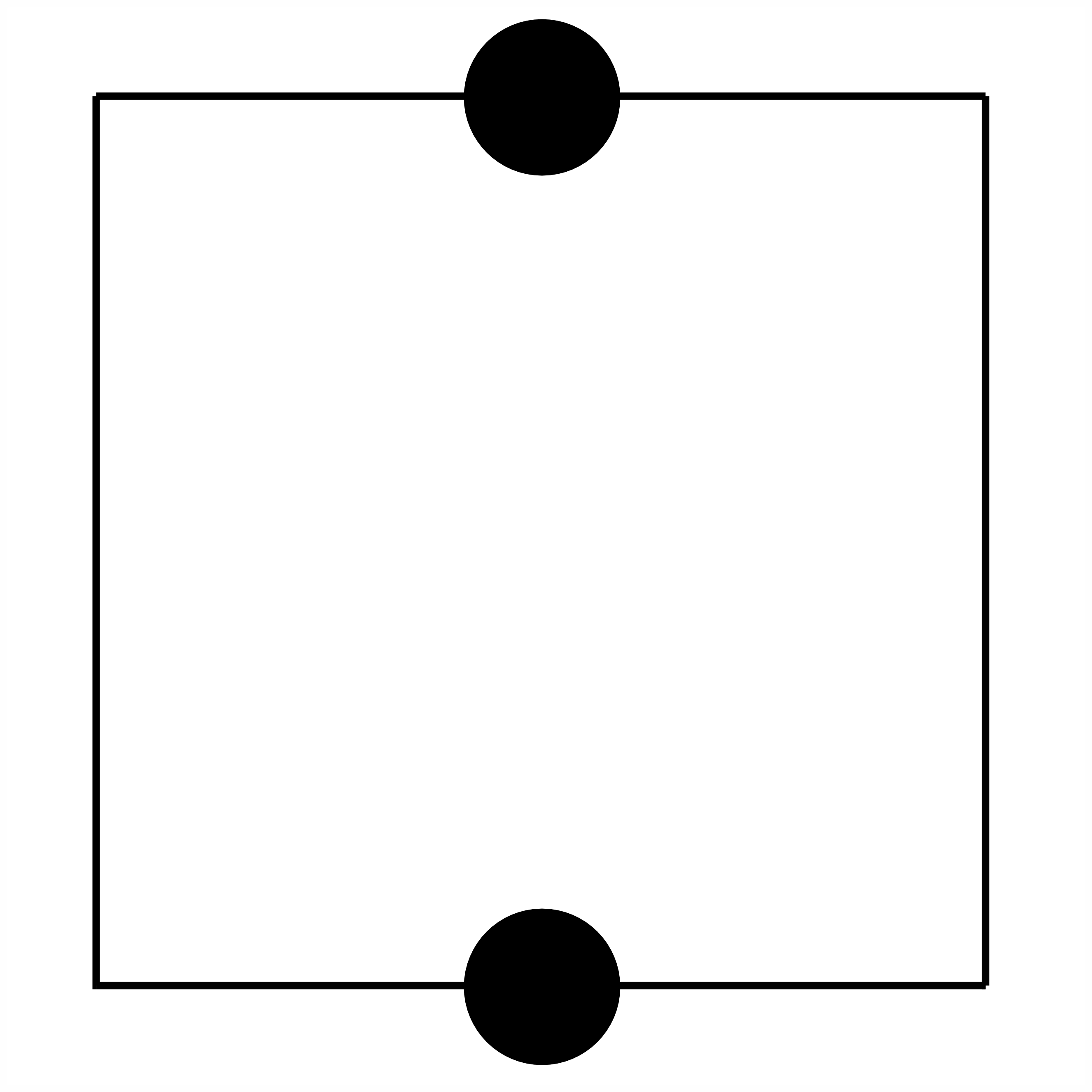}
& \includegraphics[scale=0.02]{DG1.png}
& \includegraphics[scale=0.02]{CG1.png}
& \includegraphics[scale=0.02]{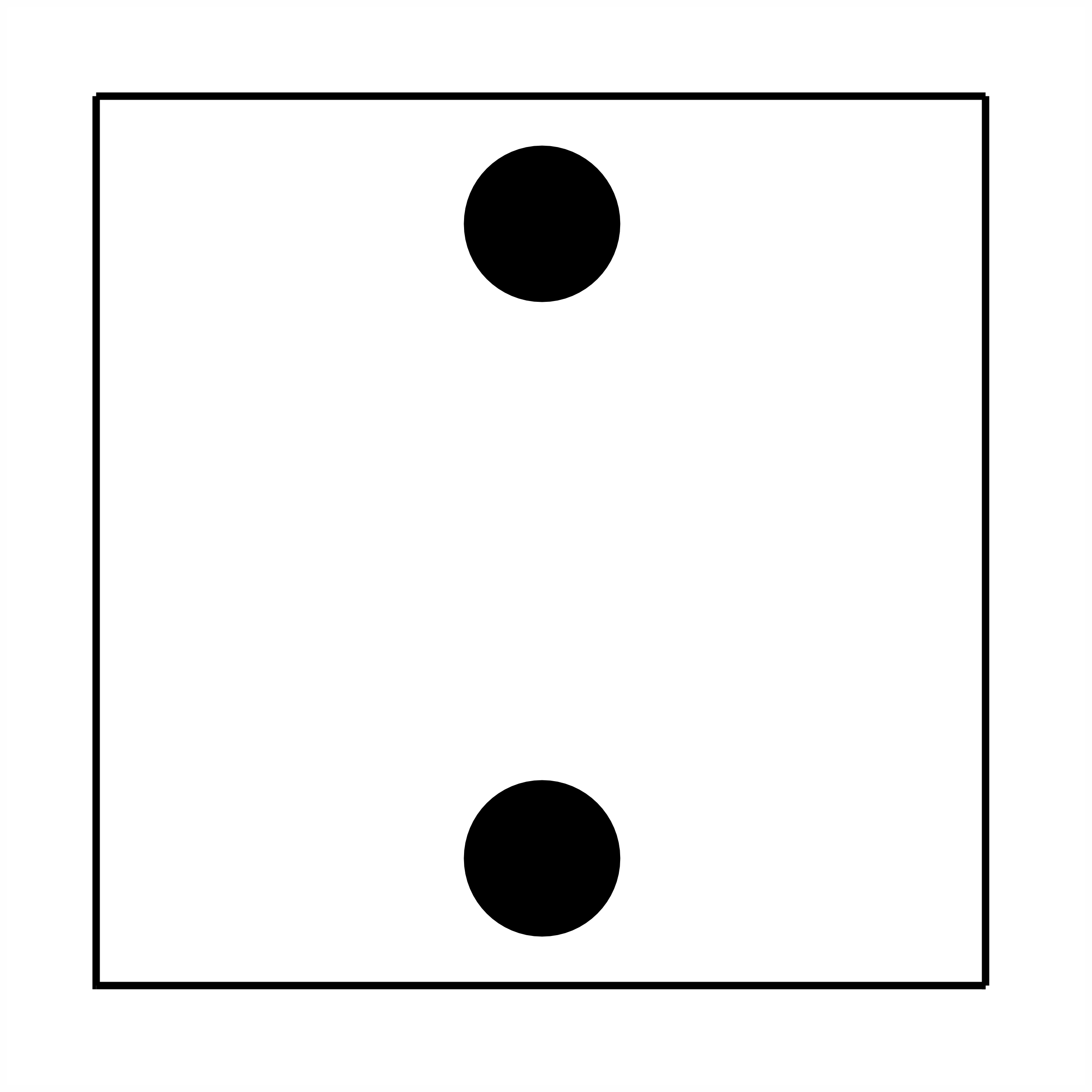}
\end{tabular}
\caption*{\small{\textbf{Table 1}:
An example set of spaces $\lbrace V_0, V_1, \tilde{V}_1, \hat{V}_0 \rbrace$
to be used in the recovered advection scheme, for a vertical slice model on
quadrilateral elements.
These spaces are given for the density $\rho$, velocity $\bm{v}$ and potential
temperature $\theta$.
$V_0$ represents the original space of the function whilst $V_1$ is the space in which
the advection takes place.
$\tilde{V}_1$ the fully-continuous space of same degree as $V_1$ 
and $\hat{V}_0$ is the fully-discontinuous version of $V_0$.
The diagrams represent the locations of the degrees of freedom for each of these spaces.
The space $\mathrm{RT}_k$ is the $k$-th Raviart-Thomas space, while $V_h\times V_v$
represents the tensor product of the horizontal space $V_h$ with the vertical space $V_v$.}}
\end{table} \\
For each variable the space $V_0$ is the normal space in which the variable lies.
Since we are motivated by using the lowest-order family
of compatible finite element spaces on quadrilateral elements,
that is what we will use here.
For $\bm{v}$ this is $V_0=\mathrm{RT}_0$, 
the lowest-order Raviart-Thomas space with vector DOFs
that have normal components continuous over cell boundaries.
The density $\rho$ lies $\mathrm{DG}_0\times \mathrm{DG}_0$, 
which has a single DOF at the centre
of the cell, and $\theta\in \mathrm{DG}_0\times\mathrm{CG}_1$,
i.e. discontinuous constant values in the horizontal but continuous linear in the vertical.
The $\theta$ DOFs are co-located with those for vertical velocity. \\
\\
For the advection operator $\mathcal{A}$ to have second-order numerical accuracy,
the advection should take place in spaces that are at least linear in each direction.
We therefore choose $V_1$ to be the smallest entirely discontinuous space that is linear
in both directions.
For $\rho$ and $\theta$, this is $\mathrm{DG}_1\times\mathrm{DG}_1$,
whilst for $\bm{v}$ this is the vector $\mathrm{DG}_1\times\mathrm{DG}_1$ space,
(and so is $\mathrm{DG}_1\times\mathrm{DG}_1$ for each component).\\
\\
The space $\tilde{V}_1$ is then formed by taking the completely continuous
form of $V_1$, whilst $\hat{V}_0$ is formed from the completely broken or discontinuous
version of $V_0$. 
The full set of spaces is listed in Table 1, whilst also shows the spaces diagrammatically,
representing scalar DOFs by dots and vector DOFs by arrows.

\subsection{The Recovery Operator}\label{sec:recovery operator}
Here we discuss the details of the recovery operator that
we will use, which is very similar to the `weighted averaging'
operator used in \cite{georgoulis2018recovered}.
Our recovery operator reconstructs $\rho_0\in V_0$ into $
\tilde{V}_1$ using the following procedure.
Let $i$ be a degree of freedom in the space $\tilde{V}_1$.
The value of the field in $\tilde{V}_1$ at $i$ is 
determined to be the value of $\rho_0$ at the location of $i$.
However as $\tilde{V}_1$ is continuous, $i$ may be shared 
between a set of multiple elements $\curlybrackets{e_i}$.
In this case the value in $\tilde{V}_1$ is the average of the
values over $\curlybrackets{e_i}$.
Such an operator is found in \cite{georgoulis2018recovered}
to possess second-order convergence in the $L^2$-norm when
$V_0$ is the discontinuous constant space 
$\mathrm{DG}_0\times\mathrm{DG}_0$
and $\tilde{V}_1$ is $\mathrm{CG}_1\times\mathrm{CG}_1$,
the space of continuous linear functions over cells.
These spaces correspond to those listed in Section 
\ref{sec: example spaces} that we will use for our advection schemes. 
\newR{This operator is intended only for use with fields on flat meshes, and must be extended for transport of vector fields on curved meshes.}\\
\\
However, this operation for $\mathrm{DG}_0\times\mathrm{DG}_0$ 
to $\mathrm{CG}_1\times\mathrm{CG}_1$
does not have second-order convergence when representing
fields with non-zero gradient at the boundaries of the domain.
Whilst the second-order convergence holds for the interior
of the domain, at the boundaries degrees of freedom are shared
by fewer elements and may not necessarily accurately represent
the gradient.
We therefore extend the recovery operation at the boundaries
by finding the field $\rho_1\in \hat{V}_1$ that minimises the curvature
$\int_e |\grad \rho_1|^2 \dx{x}$ over a given element $e$
subject to the constraints:
\begin{enumerate}
\item $\rho_1=\rho_r$ at the interior degrees of freedom
of the element $e$, where $\rho_r\in \tilde{V}_1$
is the first recovered field.
\item $\int_e \rho_1 \dx{x}=\int_e\rho_0 \dx{x}$ for the element
$e$, where $\rho_0\in V_0$ is the original field.
\end{enumerate}
Once this $\rho_1$ has been found, the final recovered field
$\rho_\mathcal{R}\in \tilde{V}_1$ is given by applying the
original `averaging' recovery operator again to $\rho_1$. \\
\\
Finally, we will now show that the specific recovery operator that
we have defined in this section satisfied Assumption \ref{assumption},
i.e. for all $\rho_0\in V_0$ there is some $C>0$ such that 
$||\mathcal{R}\rho_0||\leq C||\rho_0||$, where we take
$V_0=\mathrm{DG}_0\times\mathrm{DG}_0$ and
$\tilde{V}_1=\mathrm{CG}_1\times\mathrm{CG}_1$.
This property is used in Section \ref{sec:stability}.
\setcounter{theorem}{0}
\begin{theorem} \label{thm:recovery_bounded}
Consider the action of the specific recovery operator 
$\mathcal{R}:V_0\to \tilde{V}_1$ defined in
Section \ref{sec:recovery operator} upon a field
$\rho_0\in V_0$ for $V_0=\mathrm{DG}_0\times\mathrm{DG}_0$
and $\tilde{V}_1=\mathrm{CG}_1\times\mathrm{CG}_1$.
There is some $C>0$ such that 
$||\mathcal{R}\rho_0||\leq C||\rho_0||$ 
for all $\rho_0\in V_0$, where $||\cdot ||$ denotes the $L^2$
norm.
\end{theorem}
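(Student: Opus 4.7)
The plan is to exploit the fact that $\mathcal{R}$ is a local linear operator between finite-dimensional spaces: its action on any cell depends only on a bounded patch of neighbouring cell values, and a scaling-to-the-reference-cell argument converts local vertex-value bounds into $L^2$ bounds with constants depending only on mesh shape regularity. I would then sum local estimates using the bounded overlap of patches.

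First I would handle the purely interior averaging. Writing $\rho_0|_e = c_e$ for each cell $e$, the recovered value at any vertex $v$ shared by a patch of cells $\{e_i\}$ is $\tfrac{1}{|\{e_i\}|}\sum_i c_{e_i}$, so by Cauchy--Schwarz its square is bounded by a constant times $\sum_i c_{e_i}^2$. On a fixed cell $e$, the $L^2$ norm of a $Q_1$ function is equivalent to $|e|^{1/2}$ times the Euclidean norm of its four vertex values (via a reference-cell scaling), yielding
\[
\|\mathcal{R}\rho_0\|_{L^2(e)}^2 \leq C\, |e| \sum_{v \in \mathrm{vert}(e)} |(\mathcal{R}\rho_0)(v)|^2 \leq C'\, |e| \sum_{e' \in \mathrm{patch}(e)} c_{e'}^2.
\]
Summing over $e$ and using shape regularity (each $e'$ appears in the patch of a uniformly bounded number of cells $e$, with $|e|/|e'|$ bounded) gives $\|\mathcal{R}\rho_0\|^2 \leq \tilde{C} \sum_{e'} c_{e'}^2 |e'| = \tilde{C}\, \|\rho_0\|^2$.

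Next I would handle the boundary correction. On each cell $e$ meeting the boundary, the intermediate field $\rho_1 \in \hat{V}_1$ is defined by minimising $\int_e |\nabla \rho_1|^2$ subject to linear equality constraints: matching the interior recovered values at the interior DOFs, and matching the cell-mean $\int_e \rho_0 = c_e |e|$. This is a quadratic program with linear constraints on a finite-dimensional space, whose unique minimiser is a linear and bounded function of the constraint data; after mapping to a reference cell the operator norm of this dependence is uniform across shape-regular meshes. Both constraint inputs are in turn controlled locally by $c_{e'}$ on a bounded patch around $e$, using the interior estimate above for the recovered values and the trivial identity for the mean. The final averaging applied to $\rho_1$ is then again bounded by the same patch-and-sum argument.

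The main obstacle I anticipate is maintaining \emph{uniformity} of all these constants across the mesh, in particular ensuring that the boundary curvature-minimisation remains well posed and its inverse does not degenerate near corners or on cells with only a small interior-DOF footprint. This reduces to showing that the KKT system encoding the constrained minimisation at each boundary cell is invertible with a condition number bounded independently of cell size, which again follows from a reference-element compactness argument. Composing the uniformly bounded interior averaging, the uniformly bounded boundary extension, and the final averaging then gives the desired estimate with a constant $C$ depending only on the mesh shape regularity.
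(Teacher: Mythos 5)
Your proposal is correct and follows essentially the same route as the paper's proof: scaling each element to a reference cell, using norm equivalence between the $L^2$ norm of a $Q_1$ function and the Euclidean norm of its vertex values, applying Cauchy--Schwarz to the vertex averages, and summing over patches with bounded overlap, with the boundary correction handled as a bounded linear function of the local data. If anything, your treatment of the well-posedness and uniformity of the boundary minimisation is more explicit than the paper's, which disposes of that step in a single sentence.
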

\begin{proof}
We begin by defining $\rho_\mathcal{R}:=\mathcal{R}\rho_0$.
We consider the $L^2$ norm of $\rho_\mathcal{R}$ over an
individual element $e_i$, $||\rho_\mathcal{R}||^2_{e_i}$.
The element $e_i$ and cells that it shares DOFs with form
a patch $\mathcal{P}_i$, and we introduce a coordinate scaling
$\bm{x}\to \bm{x}/h$ under which $e_i$ becomes $e_i'$,
which has unit area.
If $\curlybrackets{\phi_j(\bm{x})}$ are the $M$ basis functions 
spanning $\tilde{V}_1$, then $\rho_\mathcal{R}$ can be written as
\begin{equation}
\rho_\mathcal{R} = \sum_j^M \rho_{\mathcal{R},j}\phi_j(\bm{x}),
\end{equation}
where $\rho_{\mathcal{R},j}$  is the value of 
$\rho_\mathcal{R}$ at the $j$-th DOF.
We now consider $||\rho_\mathcal{R}||^2_{e_i'}$
\begin{equation}
||\rho_\mathcal{R}||^2_{e_i'} =
\int_{e_i'}\sum_j^M\sum_k^M \rho_{\mathcal{R},j}
\rho_{\mathcal{R},k} \phi_j(h\bm{x}) \phi_k(h\bm{x}) \dx{x}
\equiv ||\bm{\rho_{\mathcal{R}}}||^2_{\Phi',e_i'},
\end{equation}
where $\bm{\rho_{\mathcal{R}}}$ is the vector of values of
$\rho_\mathcal{R}$ at DOFs and 
$||\cdot||_{\Phi'}\equiv ||\bm{y}^T\Phi'\bm{y}||$ denotes the norm 
with the mass matrix
$\Phi':=\int \phi_j(h\bm{x})\phi_k(h\bm{x}) \dx{x}$ 
acting upon some $M$-dimensional vector $\bm{y}$.
From norm equivalence, we know that for some $C>0$,
$\bm{\rho_{\mathcal{R}}}$ evaluated in the $\Phi'$ norm can be
bounded from above by evaluation with the vector norm, and we
can write this as the sum of components
\begin{equation}
||\rho_\mathcal{R}||^2_{e_i'} \leq C_i\sum_{j\in e_i}^M 
\left(\rho_{\mathcal{R},j}\right)^2,
\end{equation}
where $C_i$ is a constant that depends upon the size of the element.
If the $j$-th DOF in $\tilde{V}_1$ is shared between $N_j$ cells,
then $\rho_{\mathcal{R},j}$ is the average values of $\rho_0$
in those cells, and hence $\rho_{\mathcal{R},j} = \frac{1}{N_j}\sum_k^{N_j} \rho_{0,k}$, giving
\begin{equation}
\begin{split}
||\rho_\mathcal{R}||^2_{e_i'} \leq C_i\sum_{j\in e_i'}^M 
\left(\frac{1}{N_j}\sum_k^{N_j} \rho_{0,k}\right)^2
& \leq C_i\sum_{j\in e_i'}^M 
\left(\sum_k^{N_j} \rho_{0,k}\right)^2 \\
& \leq C_i\sum_{j\in e_i'}^M 
\sum_k^{N_j} \left(\rho_{0,k}\right)^2
\leq C_i||\rho_0||_{\mathcal{P}_i'}^2,
\end{split}
\end{equation} 
where the equalities follow as $N_j$ is a positive integer, 
from the Cauchy-Schwarz inequality and from the definition of
the $L^2$ norm in $V_0$.
The constant $C_i$ has absorbed the effect of double-counting of
cells over the patch.
Under some regularity assumptions about the shape of the mesh,
\begin{equation}
||\rho_\mathcal{R}||_{e_i}^2 \leq C^\ast ||\rho_0||^2_{\mathcal{P}_i}
\end{equation}
where $C^{\ast}$ is the maximum of $C_i$ over the mesh.
Now, considering the $L^2$ norm over the whole domain,
\begin{equation}
||\rho_\mathcal{R}||^2 \leq \sum_i ||\rho_\mathcal{R}||^2_{e_i}
\leq \sum_i C_i ||\rho_0||^2_{\mathcal{P}_i} \leq
C||\rho_0||^2,
\end{equation}
where the constant again takes double-counting into account.
This also holds for the procedure at the boundaries, where
the values of the reconstructed field are a linear function of the interior and original field values.
Thus we arrive at the conclusion that for some $C>0$,
\begin{equation}
||\rho_\mathcal{R}|| \leq C ||\rho_0||.
\end{equation}
\end{proof}

\subsection{Limiting}\label{sec:limiting}
In numerical weather or climate models, there may be many additional prognostic
variables representing moisture or chemical species.
These variables will typically lie in \newR{either the same space as the density $\rho$ or the potential temperature $\theta$.
In this paper we will consider only moisture variables, which 
will lie in the space of $\theta$, which can simplify the
thermodynamics associated with phase changes.
However it may come at the cost of sacrificing conservation
of the mass of water, although this could be remedied by solving
the transport equation in `conservative' form 
(\ref{eqn:conservative form}) for $r\rho$ rather than
in `advective' form for tracer $r$.} \\
\\
The continuous equations describing the advection of these 
\newP{tracer} variables have monotonicity
and shape-preserving properties; however the discrete representation may not replicate
these properties, which may lead to unphysical solutions such as negative concentrations.
This can be avoided by the application of slope limiters. \\
\\
In the recovered scheme, both the advection operator $\mathcal{A}$ and final projection
operator $\mathcal{P}$ may produce spurious overshoots and undershoots,
and so need limiting.
In the case of the projection operator, we do this by using the second projection operator
$\mathcal{P}_B$.
\newB{This prevents the formation of new maxima and minima as it is composed of two bounded operations: the projection into the broken lower-order space and then the recovery of continuity.}
For the set of spaces proposed in Section \ref{sec: example spaces}, to limit the advection
operator $\mathcal{A}$, we use the vertex-based limiter outlined in \cite{kuzmin2010vertex}.
\newB{This limiter divides the field in each element into a constant mean part and a linear perturbation.
Considering the values of neighbouring elements at shared vertices gives upper and lower bounds.
The limited field is then the mean part plus a constant times the perturbation, so that the field remains bounded.}
This limiter is applied to the field before the advection operator begins, and after each stage
of $\mathcal{A}$. 

\section{Properties of the Numerical Scheme}
\label{sec:properties}
\subsection{Stability} \label{sec:stability}
Here we will show the stability of the `recovered space' scheme, following a 
similar argument to that used in \cite{cotter2016embedded}.
First, we will need the following Lemma.
\begin{lemma} \label{lemma}
Let the operator $\mathcal{J}:V_0\to V_1$ 
be defined by
\begin{equation}
\mathcal{J}:=\mathcal{I}(\mathcal{R}-\hat{\mathcal{P}}\mathcal{R}+1),
\end{equation} 
so that the `recovered space' scheme can be written as
\begin{equation}
\rho_0^{n+1}=\mathcal{PAJ}\rho_0^n.
\end{equation}
Denote by $||\cdot||$ the $L^2$ norm. 
Then $||\mathcal{J}\rho_0||\leq \kappa ||\rho_0||$ for some $\kappa>0$ 
for all $\rho_0\in V_0$.
\end{lemma}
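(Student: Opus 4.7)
The plan is to expand $\mathcal{J}\rho_0$ using linearity of $\mathcal{I}$ and then bound each resulting term in the $L^2$ norm via the triangle inequality. Writing
\begin{equation*}
\mathcal{J}\rho_0 = \mathcal{I}\mathcal{R}\rho_0 - \mathcal{I}\hat{\mathcal{P}}\mathcal{R}\rho_0 + \mathcal{I}\rho_0,
\end{equation*}
and recalling that $\mathcal{I}$ acts as the identity on $L^2$ (it only re-identifies a function as a member of $V_1$), we obtain
\begin{equation*}
||\mathcal{J}\rho_0|| \leq ||\mathcal{R}\rho_0|| + ||\hat{\mathcal{P}}\mathcal{R}\rho_0|| + ||\rho_0||.
\end{equation*}

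Next I would handle the three terms individually. The last is trivially $||\rho_0||$. For the first, Assumption \ref{assumption} (verified for the specific recovery operator in Theorem \ref{thm:recovery_bounded}) gives some $C>0$ with $||\mathcal{R}\rho_0|| \leq C||\rho_0||$. For the middle term, I would observe that $\hat{\mathcal{P}}$ is defined by the Galerkin condition $\int_\Omega \hat{\psi}(\hat{\mathcal{P}}\tilde{v} - \tilde{v})\dx{x}=0$ for all $\hat{\psi}\in\hat{V}_0$, i.e. it is precisely the $L^2$-orthogonal projection onto the closed subspace $\hat{V}_0\subset L^2(\Omega)$. Orthogonal projections in a Hilbert space are non-expansive, so $||\hat{\mathcal{P}}\tilde{v}|| \leq ||\tilde{v}||$ for every $\tilde{v}\in\tilde{V}_1$. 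Applying this with $\tilde{v}=\mathcal{R}\rho_0$ and then invoking Assumption \ref{assumption} again yields $||\hat{\mathcal{P}}\mathcal{R}\rho_0|| \leq C||\rho_0||$.

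Summing the three bounds gives
\begin{equation*}
||\mathcal{J}\rho_0|| \leq (2C+1)||\rho_0||,
\end{equation*}
so the conclusion holds with $\kappa := 2C+1 > 0$, independent of $\rho_0$. The only substantive point that needs any care is the non-expansivity of $\hat{\mathcal{P}}$, which is the only place the definition of $\hat{\mathcal{P}}$ as a Galerkin projection (rather than a generic bounded map) enters; everything else is the triangle inequality and the already-established bound on $\mathcal{R}$. There is no real obstacle, since both $\mathcal{R}$ and $\hat{\mathcal{P}}$ are linear and bounded on $L^2$, and the assumption provides exactly the constant needed.
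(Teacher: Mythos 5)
Your proposal is correct and follows essentially the same route as the paper: triangle inequality on the three terms, the bound $||\mathcal{R}\rho_0||\leq C||\rho_0||$ from Assumption \ref{assumption}, and non-expansivity of $\hat{\mathcal{P}}$, yielding $\kappa=1+2C$. The only cosmetic difference is that you invoke non-expansivity of the $L^2$-orthogonal projection as a standard Hilbert-space fact, whereas the paper derives it explicitly by expanding $\int_\Omega(\hat{\mathcal{P}}\tilde{\rho}-\tilde{\rho})^2\dx{x}\geq 0$ together with the Galerkin orthogonality condition.
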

\begin{proof} From the definition of $\mathcal{J}$,
\begin{equation}
||\mathcal{J}\rho_0|| = ||\rho_0 + \mathcal{R}\rho_0 - \hat{\mathcal{P}}
\mathcal{R}\rho_0||.
\end{equation}
By applying the triangle inequality,
\begin{equation}
||\mathcal{J}\rho_0||\leq ||\rho_0|| + ||\mathcal{R}\rho_0|| +
||\hat{\mathcal{P}}\mathcal{R}\rho_0||.
\end{equation}
We will now inspect the $||\hat{\mathcal{P}}\mathcal{R}\rho_0||$ term.
The definition of $\hat{\mathcal{P}}$ is that 
$\int_\Omega \hat{\psi}\tilde{\rho}\dx{x}=
\int_\Omega \hat{\psi}\hat{\mathcal{P}}\tilde{\rho}\dx{x}$ 
for all $\hat{\psi}\in \hat{V}_0$, where $\tilde{\rho}\in\tilde{V}$.
Since $\hat{\mathcal{P}}\tilde{\rho}\in\hat{V}_0 $, then it must be true
 that
\begin{equation} \label{eqn:project_special_case}
\int_\Omega (\hat{\mathcal{P}}\tilde{\rho})^2\dx{x} = 
\int_\Omega (\hat{\mathcal{P}}\tilde{\rho})\tilde{\rho}\dx{x}.
\end{equation}
Now we consider the integral 
$\int_\Omega (\hat{\mathcal{P}}\tilde{\rho} -
\tilde{\rho})^2\dx{x}$, which cannot be negative.
Expanding this integral and using the result (\ref{eqn:project_special_case})
gives $\int_{\Omega}\tilde{\rho}^2\dx{x}\geq 
\int _\Omega (\hat{\mathcal{P}}\tilde{\rho})^2\dx{x}$, in other words that
$||\hat{\mathcal{P}}\mathcal{R}\rho_0||\leq||\mathcal{R}\rho_0||$.
Hence, returning to considering $||\mathcal{J}\rho_0||$, we obtain
\begin{equation}
||\mathcal{J}\rho_0||\leq ||\rho_0|| + 2||\mathcal{R}\rho_0||.
\end{equation}
Finally, we use the property of $\mathcal{R}$
that $||\mathcal{R}\rho_0||\leq C ||\rho_0||$ for some $C> 0$,
and so letting $\kappa=1+2C$ we arrive at
\begin{equation}
||\mathcal{J}\rho_0||\leq \kappa ||\rho_0||.
\end{equation}
This completes the proof.
\end{proof}

\begin{theorem}
Let the advection operator $\mathcal{A}$ have a
stability constant $\alpha$, such that 
\begin{equation}
||\mathcal{A}||:=\sup_{\rho_1\in V_1, ||\rho_1||>0}
\frac{||\mathcal{A}\rho_1||}{||\rho_1||}\leq \alpha.
\end{equation}
Then the stability constant $\alpha^*$ of the `recovered space' scheme on $V_0$ satisfies $\alpha^*=\kappa\alpha$ for some constant $\kappa$.
\end{theorem}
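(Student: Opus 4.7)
The strategy is to bound the norm $\|\mathcal{P}\mathcal{A}\mathcal{J}\rho_0\|$ by chasing the three operators from right to left, using Lemma \ref{lemma} for $\mathcal{J}$, the hypothesis for $\mathcal{A}$, and a short separate argument for $\mathcal{P}$. Formally, the plan is to unpack the definition of $\alpha^*$,
\begin{equation}
\alpha^* := \sup_{\rho_0\in V_0,\,\|\rho_0\|>0}\frac{\|\mathcal{P}\mathcal{A}\mathcal{J}\rho_0\|}{\|\rho_0\|},
\end{equation}
and to show that each operator in the composition contributes an $L^2$ bound, so that the product yields the desired constant times $\alpha$.

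First I would handle the outer operator $\mathcal{P}$. For $\mathcal{P}=\mathcal{P}_A$ this is the $L^2$-orthogonal projection from $V_1$ onto $V_0$, so exactly the argument used in the proof of Lemma \ref{lemma} for $\hat{\mathcal{P}}$ applies: expanding $\int_\Omega (\mathcal{P}_A v - v)^2\dx{x}\geq 0$ together with the defining relation $\int_\Omega\psi\mathcal{P}_A v\dx{x}=\int_\Omega\psi v\dx{x}$ for $\psi=\mathcal{P}_A v\in V_0\subset V_1$ gives $\|\mathcal{P}_A v\|\leq \|v\|$. For $\mathcal{P}=\mathcal{P}_B = \mathcal{P}_R\mathcal{P}_I$, I would note that pointwise evaluation at DOFs followed by averaging is a linear map on a finite-dimensional space, and that an argument in the spirit of Theorem \ref{thm:recovery_bounded}, passing through a reference element and invoking norm equivalence, shows $\|\mathcal{P}_B v\|\leq C_P\|v\|$ for some $C_P>0$. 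In either case one obtains $\|\mathcal{P} v\|\leq C_P \|v\|$ for all $v\in V_1$, with $C_P=1$ in the $\mathcal{P}_A$ case.

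Next I would combine the three estimates: by the hypothesis on $\mathcal{A}$, $\|\mathcal{A}w\|\leq \alpha\|w\|$ for all $w\in V_1$, and by Lemma \ref{lemma}, $\|\mathcal{J}\rho_0\|\leq \kappa_0\|\rho_0\|$ for $\kappa_0=1+2C$ with $C$ coming from Assumption \ref{assumption}. Chaining these,
\begin{equation}
\|\mathcal{P}\mathcal{A}\mathcal{J}\rho_0\|\leq C_P\,\|\mathcal{A}\mathcal{J}\rho_0\|\leq C_P\alpha\,\|\mathcal{J}\rho_0\|\leq C_P\alpha\kappa_0\,\|\rho_0\|,
\end{equation}
so setting $\kappa := C_P\kappa_0$ yields $\alpha^*\leq \kappa\alpha$, establishing the claim (with the understanding that $\kappa$ in the theorem statement is a new constant absorbing both the recovery bound and the projection bound, not the $\kappa$ of Lemma \ref{lemma}).

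The only step that requires any real care is the bound on $\mathcal{P}_B$, since it is not an $L^2$ projection and so does not automatically have operator norm $1$. I would expect this to be the main obstacle: one must verify that pointwise evaluation from $V_1$ into $\hat{V}_0$ is stable in $L^2$ (which is finite-dimensional and uses mesh regularity), and that the subsequent averaging step $\mathcal{P}_R$ obeys the same type of estimate as $\mathcal{R}$ in Theorem \ref{thm:recovery_bounded}. Both follow from essentially identical reference-element plus norm-equivalence arguments, and no additional conceptual ingredient beyond those already deployed in the proof of Theorem \ref{thm:recovery_bounded} is needed.
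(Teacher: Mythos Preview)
Your proof is correct and follows the same overall chaining strategy as the paper: bound $\mathcal{J}$ via Lemma~\ref{lemma}, bound $\mathcal{A}$ via the hypothesis, and bound $\mathcal{P}$ separately, then compose. The paper organises this slightly differently, passing through suprema and using the inclusion $V_0\subset V_1$ to replace $\sup_{\rho_0}\|\mathcal{PAJ}\rho_0\|/\|\mathcal{J}\rho_0\|$ by $\sup_{\rho_1}\|\mathcal{PA}\rho_1\|/\|\rho_1\|$, but this is cosmetic.

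The one substantive difference is your treatment of $\mathcal{P}_B$. The paper claims that because $\mathcal{P}_I$ and $\mathcal{P}_R$ each map into a strictly smaller subspace ($\hat V_0\subset V_1$, $V_0\subset\hat V_0$), the supremum of $\|\mathcal{P}_B\mathcal{A}\rho_1\|$ is automatically no larger than that of $\|\mathcal{A}\rho_1\|$, i.e.\ effectively $\|\mathcal{P}_B\|\leq 1$. That inference is not valid in general: mapping into a smaller space does not by itself control the operator norm. Your route---treating $\mathcal{P}_B$ as a linear map on a finite-dimensional space and invoking a reference-element norm-equivalence argument in the style of Theorem~\ref{thm:recovery_bounded} to get $\|\mathcal{P}_B v\|\leq C_P\|v\|$---is the rigorous way to do it, at the modest cost of an extra constant $C_P$ absorbed into the final $\kappa$. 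Since the theorem only asserts existence of \emph{some} constant $\kappa$, this is exactly what is needed.
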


\begin{proof}
Since from Lemma \ref{lemma},
$||\mathcal{J}\rho_0||\leq \kappa ||\rho_0||$ for some
$\kappa>1$, 
\begin{equation}
\sup_{\rho_0\in V_0, ||\rho_0||>0}\frac{||\mathcal{PAJ}\rho_0||}{||\rho_0||}
\leq \sup_{\rho_0\in V_0, ||\rho_0||>0} 
\kappa \frac{||\mathcal{PAJ}\rho_0||}{||\mathcal{J}\rho_0||}.
\end{equation}
As $V_0\subset V_1$, the supremum over elements in $V_1$ cannot be 
smaller than the supremum over elements in $V_0$.
Recognising that $\mathcal{J}\rho_0\in V_1$,
\begin{equation}
\sup_{\rho_0\in V_0, ||\rho_0||>0} 
\frac{||\mathcal{PAJ}\rho_0||}{||\mathcal{J}\rho_0||} \leq
\sup_{\rho_1\in V_1, ||\rho_1||>0} 
\frac{||\mathcal{PA}\rho_1||}{||\rho_1||}.
\end{equation}
For the final step, we must consider both cases
$\mathcal{P}_A$ and $\mathcal{P}_B$ for the projection operator.
In the case of $\mathcal{P}_A$, we can use a similar argument to that
of the projection operator in Lemma \ref{lemma} to obtain that
$||\mathcal{PA}\rho_1||\leq ||\mathcal{A}\rho_1||$.
For $\mathcal{P}_B=\mathcal{P}_R\mathcal{P}_I$, each step maps
a function into a space that is smaller; i.e. $\hat{V}_0\subset V_1$ and
$V_0\subset \hat{V}_0$, so that the supremum of $||\mathcal{PA}\rho_1||$
must be smaller than the supremum of $||\mathcal{A}\rho_1||$.
In both cases we obtain that 
\begin{equation}
\sup_{\rho_1\in V_1, ||\rho_1||>0} 
\frac{||\mathcal{PA}\rho_1||}{||\rho_1||} \leq
\sup_{\rho_1\in V_1, ||\rho_1||>0} 
\frac{||\mathcal{A}\rho_1||}{||\rho_1||}\leq \alpha,
\end{equation}
where the final inequality defines the stability of $\mathcal{A}$.
Combining these arguments together gives
\begin{equation}
\sup_{\rho_0\in V_0, ||\rho_0||>0}\frac{||\mathcal{PAJ}\rho_0||}{||\rho_0||}
\leq \kappa \alpha ,
\end{equation}
and so the stability constant of the `recovered space' 
scheme is $\alpha^*=\kappa\alpha$.
\end{proof}

\subsection{Von Neumann Analysis}\label{sec: von neumann}
Now we will attempt to identify the stability constant for three one-dimensional
examples, by performing Von Neumann stability analysis.
This can also be used with the Courant-Friedrich-Lewy (CFL) condition
to give upper limits to stable Courant numbers.\\
\\
The three cases that will be considered are that of $V_0 = \mathrm{DG}_0$
(which might represent the advection of $\rho$), and the two cases of
$V_0=\mathrm{CG}_1$ with $\mathcal{P}_A$ and $\mathcal{P}_B$ as the
projection operators (for advection of velocity and moisture
respectively).
The same advection operator \newR{discretising the advective form 
(\ref{eqn:advective}) of the transport equation} will be used for all three cases,
with the advection taking place in $V_1=\mathrm{DG}_1$. \\
\\
In each case we consider a periodic domain of length $L$, divided into $N$ cells, each 
of length $\Delta x$.
We will make the assumption that our function $\rho_n(x)$ at the $n$-th
time step can be written as a sum of Fourier modes,
\begin{equation}
\rho^n(x) = \sum_k A_k^n e^{ikx}.
\end{equation}
Then for the $k$-th mode $\rho_{n,k}(x+\Delta x)=\rho_{n,k}(x)e^{ik\Delta x}$.
\\
\\
Three spaces are relevant to this analysis:
DG$_0$, which is piecewise constant and whose DOFs are in the centre of cells;
CG$_1$, which is continuous piecewise linear and has one DOF per cell at the cell boundary;
and DG$_1$, which is linear within a cell but discontinuous between cells and has
two DOFs per cell -- one at each cell boundary.
These spaces are shown in Figure 1.
\begin{figure} 
\centering
\includegraphics[scale=0.1]{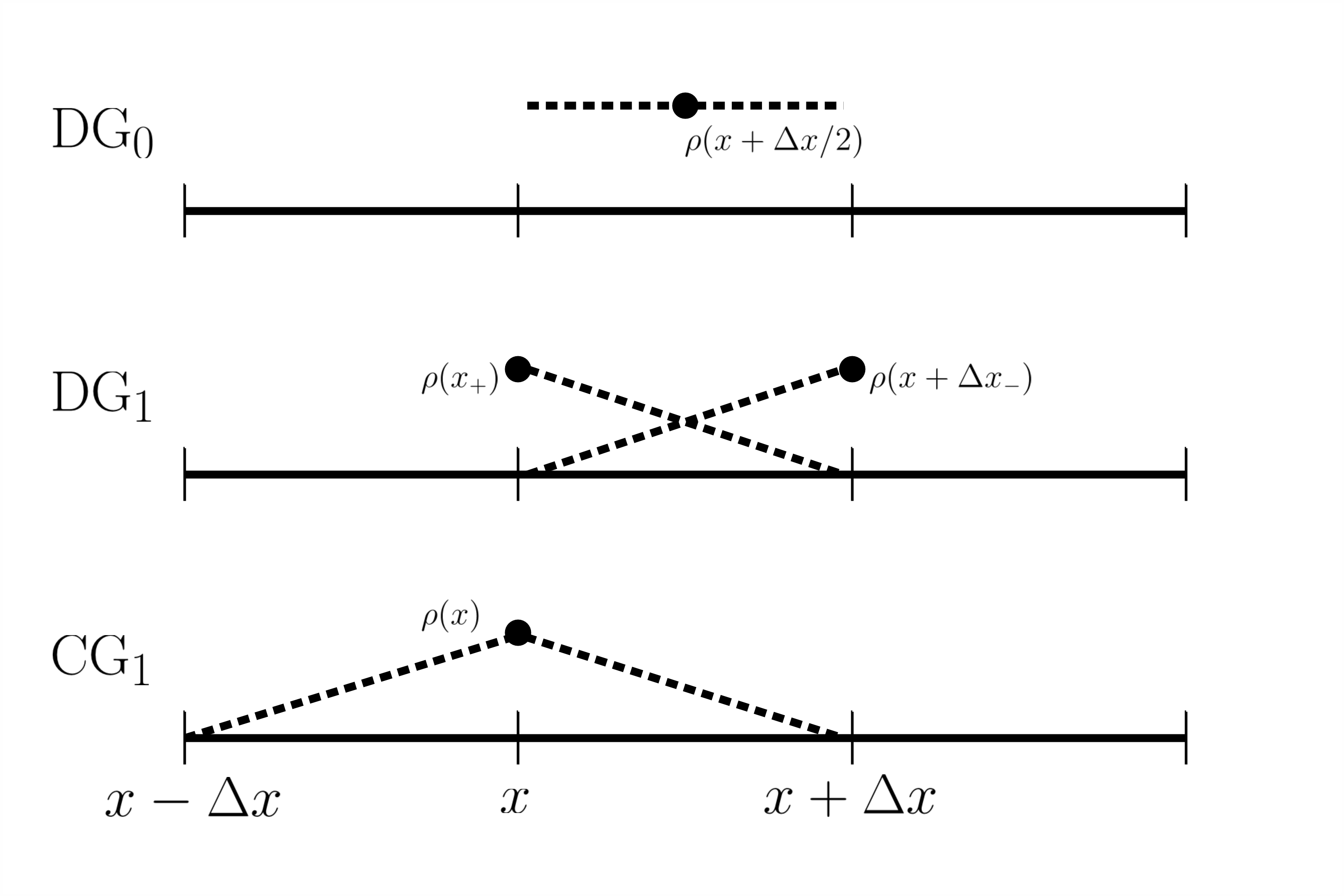} 
\caption*{\small{\textbf{Figure 1}:
The degrees of freedom and basis functions for each of the spaces
used in the Von Neumann analysis.}} \label{basis_functions}
\end{figure}

\subsubsection{Advection Operator}
First we described the advection operator $\mathcal{A}_k$ acting upon the
$k$-th mode of a function in $\mathrm{DG}_1$.
In each cell, the function can be described by two components: evaluation
of the field at each cell boundary.
For the advection, we use a simple upwinding scheme with a forward Euler
time discretisation, within the framework of a three-step Runge-Kutta scheme.
We describe the action of a single forward Euler step with the operator $\mathcal{L}_k$.
This is determined by discretising the one dimensional
advection equation with constant $u>0$,
\begin{equation}
\pfrac{q}{t}+u\pfrac{q}{x}=0,
\end{equation}
for $q\in V_1$ by integrating with the test function $\psi\in V_1$.
This gives
\begin{equation}
\int_0^L \psi q_{n+1}\dx{x} = 
\int_0^L \psi q_{n} \dx{x} + u\Delta t\int_0^L \pfrac{\psi}{x}q_{n}\dx{x} 
-u\Delta t\sum_j \llbracket \psi q_n \rrbracket_j,
\end{equation}
where $\llbracket \cdot \rrbracket_j$ denotes the jump in field between the
 $j$-th cell and the $(j+1)$-th cell.
Making the assumption that $q(x)=q(x+\Delta x)e^{-ik \Delta x}$, and
using that $q$ is piecewise linear, we can write down a representation of
$\mathcal{L}_k$ for the degrees of freedom on either side of a given cell:
\begin{equation}
\mathcal{L}_k = \begin{pmatrix}
1 - 3c & 4ce^{-ik\Delta x} - c \\
3c & 1 - c - 2ce^{-ik\Delta x}
\end{pmatrix}.
\end{equation}
We then obtain the full advection operator 
by using the three-step Runge-Kutta scheme
outlined in \cite{shu1988efficient}:
\begin{align}
q^{(1)}& := q_{n,k} + \mathcal{L}_k q_{n,k}, \\
q^{(2)} & := \frac{3}{4}q_{n,k} + \frac{1}{4}q^{(1)} 
+ \frac{1}{4}\mathcal{L}_k q^{(1)}, \\
q_{n+1,k}&  =  \frac{1}{3}q_{n,k} + \frac{2}{3}q^{(2)}
+\frac{2}{3}\mathcal{L}_k q^{(2)}.
\end{align}
The overall advection operator is then
\begin{equation}
\mathcal{A}_k =\mathds{1}+ \mathcal{L}_k+\frac{1}{2}\mathcal{L}_k^2+\frac{1}{6}\mathcal{L}_k^3,
\end{equation}
where $\mathds{1}$ is the identity operator.
We omit the matrix representation of $\mathcal{A}_k$ here for brevity. 

\subsubsection{Case A: $\mathrm{DG_0}$}
This represents the advection of density $\rho$, 
or the velocity $\bm{v}$ perpendicular to its direction.
The set of spaces $\lbrace V_0, V_1, \tilde{V}_1, \hat{V}_0\rbrace$
is $\lbrace\mathrm{DG}_0, \mathrm{DG}_1, \mathrm{CG}_1, \mathrm{DG}_0\rbrace$. \\
\\
Then, for a given cell and Fourier mode, the operators can be represented in
the following matrix forms
\begin{equation}
\hat{\mathcal{P}}=\mathcal{P}=
\frac{1}{2}\begin{pmatrix}
1 & 1
\end{pmatrix}, \ \ \ \ \ 
\mathcal{R}_k= \frac{1}{2}
\begin{pmatrix}
1+e^{-ik\Delta x} \\
e^{ik\Delta x} + 1
\end{pmatrix}, \ \ \ \ \ 
\mathcal{I} = 
\begin{pmatrix}
1 \\ 1
\end{pmatrix}.
\end{equation}
Combining these operators, the advection scheme for the 
$k$-th mode of $\rho_n(x)$ is  then expressed as
\begin{equation}
\rho_{n+1,k}=\frac{1}{4}\begin{pmatrix}
1 & 1
\end{pmatrix}\mathcal{A}_k
\begin{pmatrix}
2 - i\sin(k\Delta x) \\
2 + i\sin(k\Delta x)
\end{pmatrix} \rho_{n,k}.
\end{equation}
Following the analysis through and writing
$\phi=k\Delta x$ gives a stability constant
\begin{equation}\label{eqn:A_k1}
\begin{split}
& |A_k|^2 = c^2 \left[
c^3 \left(\frac{13}{4}\cos\phi -\frac{5}{3}\cos 2\phi
+\frac{1}{12}\cos 3\phi + \frac{1}{6}\cos 4\phi - \frac{11}{6}\right)
\right. \\
& +  c^2 \left(\frac{29}{12}\sin\phi - \frac{5}{3}\sin 2\phi
+\frac{1}{12}\sin 3\phi + \frac{1}{6}\sin 4\phi
-\frac{7}{4}\cos\phi + \frac{3}{4}\cos 2\phi
-\frac{1}{4}\cos 3\phi + \frac{5}{4}\right) \\
& + c\left(-\frac{3}{4}\sin\phi + \frac{3}{4}\sin 2\phi-
\frac{1}{4}\sin 3\phi - \cos\phi +\frac{1}{4}\cos 2\phi
+\frac{3}{4}\right)
\left. -\frac{3}{2}\sin \phi + \frac{1}{4}\sin^2 2\phi 
- 1\right]^2
\end{split}
\end{equation}

\subsubsection{Case B: $\mathrm{CG_1}$ with $\mathcal{P}=\mathcal{P}_A$}
In this case the set of spaces $\lbrace V_0, V_1, \tilde{V}_1, \hat{V}_0\rbrace$
is $\lbrace\mathrm{CG}_1, \mathrm{DG}_1, \mathrm{CG}_1, \mathrm{DG}_1\rbrace$.
This describes advection of velocity parallel to its direction, or of potential temperature
without bounding the final projection step.
The operators can be represented by
\begin{equation}
\mathcal{P}_A=
\frac{1}{4+2\cos k\Delta x}\begin{pmatrix}
2+e^{-ik\Delta x} & 
1+2e^{-ik\Delta x}
\end{pmatrix}, \ \ \ \ \ 
\hat{\mathcal{P}}=\mathcal{I} = 
\begin{pmatrix}
1 \\ e^{ik\Delta x}
\end{pmatrix},
\end{equation}
where the projection operator $\mathcal{P}_A$
has been determined by solving equation (\ref{eqn:projectionA}).
The recovery operator is the identity, and since the injection $\mathcal{I}$
and the projection $\hat{\mathcal{P}}$ are equivalent in this case
the scheme acting upon $v_n$ becomes 
$v_{n+1}=\mathcal{P}_A\mathcal{A}\mathcal{I}v_n$.
Following through the analysis gives
\begin{equation}\label{eqn:A_k2}
\begin{split}
& |A_k|^2 = \left(\frac{c}{\cos\phi + 2}\right)^2
\left(c^2 \cos\phi - c^2 + 3\right)^2\sin^2\phi \\
& + \left(\frac{c}{\cos\phi + 2}\right)^2
 \left(-2c^3\cos\phi + \frac{1}{2}c^3\cos 2\phi +
\frac{3}{2}c^3 + 3c^2\cos\phi -3c^2+\cos\phi + 2 \right)^2
\end{split}
\end{equation}
\subsubsection{Case C: $\mathrm{CG_1}$ with $\mathcal{P}=\mathcal{P}_B$}
For this case, the set of spaces are the same as in the second case.
The only difference is that the projection operator $\mathcal{P}$ is now
$\mathcal{P}_B=\mathcal{P}_R\mathcal{P}_I$.
As $V_1=\hat{V}_0=\mathrm{DG}_1$, the interpolation $\mathcal{P}_I$ is the identity,
and $\mathcal{P}_B=\mathrm{P}_R$.
The operators are
\begin{equation}
\mathcal{P}_B=
\frac{1}{2}\begin{pmatrix}
1 & 
e^{-ik\Delta x}
\end{pmatrix}, \ \ \ \ \ 
\hat{\mathcal{P}}=\mathcal{I} = 
\begin{pmatrix}
1 \\ e^{ik\Delta x}
\end{pmatrix},
\end{equation}
which gives leads to the amplification factor
\begin{equation} \label{eqn:A_k3}
\begin{split}
& |A_k|^2= \left(\frac{2}{3}c^3\sin\phi
+\frac{1}{6}c^3\sin 2\phi - \frac{1}{3}c^3\sin 3\phi
-c^2\sin\phi + \frac{1}{2}c^2\sin 2\phi - c\sin\phi \right)^2
\\
& + \left(-\frac{7}{3}c^3\cos\phi -\frac{1}{6}c^3\cos 2\phi +
\frac{1}{3}c^3\cos 3\phi + \frac{13}{6}c^3 + 3c^2\cos\phi
-\frac{1}{2}c^2\cos 2\phi-\frac{5}{2}c^2+1\right)^2
\end{split}
\end{equation}

\subsection{Critical Courant Numbers}
The Courant-Friedrich-Lewy (CFL) criterion says that an
advection scheme with amplification factor $|A_k|>1$
may not be stable.
The critical Courant number $c^*$ is the lowest Courant number
$c=u\Delta t/\Delta x$ such that the amplification factor
is greater than unity.
We numerically measured the critical Courant numbers for the
three cases laid out in Section \ref{sec: von neumann}, and these
are displayed in Table 2.
Although case C has a significantly lower critical Courant 
number, the intention is to run this scheme with a limiter
\newR{or with a subcycling time discretisation, allowing it
to be used at higher Courant numbers}.
\newR{Instances of work using these kind of limiters are 
\cite{cotter2016embedded} and \cite{kuzmin2010vertex}.} \\
\\
\newR{Examples of critical Courant numbers for other upwinding schemes
can be found in Table 2.2 of \cite{cockburn2001runge}.
The most relevant comparison that can be made from this
is to that of polynomials of degree 1 with a Runge-Kutta method of order 3,
which has a critical Courant number of 0.409.
A space of discontinuous linear polynomials has the same number of degrees of freedom
as a space of discontinuous constants but with half the grid size, and thus improvements
are made if the critical Courant number is more than twice that of the transport scheme for the
linear functions.
We do therefore observe that the critical Courant numbers for cases A and B are improvements
on the discontinuous upwinding scheme applied just to discontinuous linear functions.}
\begin{table}[h!]
\centering
\begin{tabular}{c|c|c|c}
Case & A & B & C \\
\hline
$c^*$ & 0.8506 & 0.9930 & 0.3625
\end{tabular}
\caption*{\small{\textbf{Table 2}:
The critical Courant numbers for the three cases of the advection
scheme outlined in Section \ref{sec: von neumann}.
}}
\end{table}

\section{Numerical Tests}
\label{sec:numerics}
\newP{The numerical implementation of this scheme was performed using
the Firedrake software of \cite{rathgeber2017firedrake}, and
relied heavily on the tensor product element functionality on
extruded meshes, descriptions of which can be found in
\cite{mcrae2016automated}, \cite{bercea2016structure} and \cite{homolya2016parallel}.}
\subsection{Numerical Accuracy}
To verify the numerical accuracy of the scheme, we performed
a series of convergence tests.
The aim is to find how the error due to advection changes
with the grid spacing $\Delta x$.
We used tests that have an analytic solution
in the limit that $\Delta x\to 0$, and compare the final
advected profile $q$ with the `true' profile $q_h$, which
is the analytic solution projected into the relevant function
space.
This gives an error $||q-q_h||$ (where $||\cdot ||$ denotes the $L^2$ norm) which is calculated for 
the same problem at different 
resolutions, and the errors are plotted as a function
of the grid spacing $\Delta x$.
The order of the numerical accuracy is the number $n$ such that
$||q-q_h|| \sim \mathcal{O}(\Delta x^n)$, which can 
measured from the \newR{slope} of a plot of $\ln(||q-q_h||)$
against $\ln(\Delta x)$.
For simplicity, the tests we used are designed so that the `true'
profile is the same as the initial condition. \\
\\
The initial conditions were obtained by pointwise evaluation
of the expressions into higher order fields (we used CG$_3$).
These were then projected into the correct fields.
The advecting velocity used lay in the RT$_1$ space.
To mimic how the scheme might be used in a numerical weather
model, we performed some of the tests on the different sets
of spaces laid out in Section \ref{sec: example spaces}
and the configurations described in Section 
\ref{sec: von neumann}.
Each set of spaces is labelled by the variable name in Table 1,
\newB{with $\curlybrackets{V_0, V_1, \tilde{V}_1, \hat{V}_0}$ 
for the fields $\rho$, $\bm{v}$, $\theta$ and $r$ given by
\begin{align}
\rho & \in \curlybrackets{\mathrm{DG}_0\times \mathrm{DG}_0,
\mathrm{DG}_1\times \mathrm{DG}_1,
\mathrm{CG}_1\times \mathrm{CG}_1,
\mathrm{DG}_0\times \mathrm{DG}_0}, \\
\bm{v} & \in \curlybrackets{\mathrm{RT}_1,
\mathbf{DG}_1\times \mathbf{DG}_1,
\mathbf{CG}_1\times \mathbf{CG}_1,
\mathrm{broken} \ \mathrm{RT}_1}, \\
\theta & \in \curlybrackets{\mathrm{DG}_0\times \mathrm{CG}_1,
\mathrm{DG}_1\times \mathrm{DG}_1,
\mathrm{CG}_1\times \mathrm{CG}_1,
\mathrm{DG}_0\times \mathrm{DG}_1}, \\
r & \in \curlybrackets{\mathrm{DG}_0\times \mathrm{CG}_1,
\mathrm{DG}_1\times \mathrm{DG}_1,
\mathrm{CG}_1\times \mathrm{CG}_1,
\mathrm{DG}_0\times \mathrm{DG}_1},
\end{align}
where the bold font recognises that the space has vector valued nodes.}
While the scheme labelled $\theta$ uses the projection operator
$\mathcal{P}_A$, the scheme labelled $r$ represents a moisture
variable, so uses the same spaces
as $\theta$ but the projection operator $\mathcal{P}_B$
and the vertex-based limiter of \cite{kuzmin2010vertex}.
\newR{All tests solve the transport equation in the `advective' form
of (\ref{eqn:advective}).} \\
\\
The first three tests involve advection around a 2D domain 
representing a vertical slice that is periodic along
its side edges but rigid walls at the top and bottom.
The final test is performed over the surface of a sphere. 
All the vertical slice tests use a domain of height and width
1 m and advect the profile with time steps of
$\Delta t=10^{-4}$ s for a total time of $T=1$ s. 
\subsubsection{Rotational Convergence Test} \label{sec:first convergence}
The first test involves a rigid body rotation of a Gaussian
profile around the centre of the domain.
Using $x$ and $z$ as the horizontal and vertical coordinates,
defining $r^2=(x-x_0)^2+(z-z_0)^2$ for $x_0=0.375$ m, $z_0=0.5$ m
and using $r_0=1/8$ m, the initial condition used for all
fields was
\begin{equation}
q = e^{-(r/r_0)^2}.
\end{equation}
For the velocity variable, this initial profile was used
for each component of the field.
The advecting velocity is generated from a stream function 
$\psi$ via  $\bm{v}=(-\partial_z\psi, \partial_x\psi)$.
Defining $r_v^2 = (x-x_v)^2+(z-z_v)^2$  with $x_v=0.5$ m and
$z_v = 0.5$ m, the stream function used was:
\begin{equation}
\bm{\psi}(\bm{x}) =
\left\lbrace 
\begin{matrix}
\pi(r_v^2 - 0.5), & r_v < r_1, \\
Ar_v^2+Br_v+C,  & r_1 \leq r_v < r_2, \\
Ar_2^2+Br_2+C, & r_v \geq r_2.
\end{matrix} \right. 
\end{equation}
This is designed to be a rigid body rotation for $r_v<r_1$, with
no velocity for $r_v\geq r_2$ to prevent spurious noise
being generated from the edge of the domain.
The stream function and its derivative vary smoothly for
$r_1 \leq r_v < r_2$.
We use $r_1=0.48$ m and $r_2=0.5$ m, with 
$A = \pi r_1 / (r_1 - r_2)$, $B=-2Ar_2$ and 
$C=\pi(r_1^2-0.5)-Ar_1^2-Br_1$.
Results showing second order numerical accuracy can be found
in Figure 4 (left).
\newB{Initial and final fields for the density in the lowest resolution run ($\Delta x = 0.01$ m)
are displayed in Figure 2.}
\begin{figure}[h!]
\centering
\begin{subfigure}{.48\textwidth}
\centering
\includegraphics[width=\textwidth]{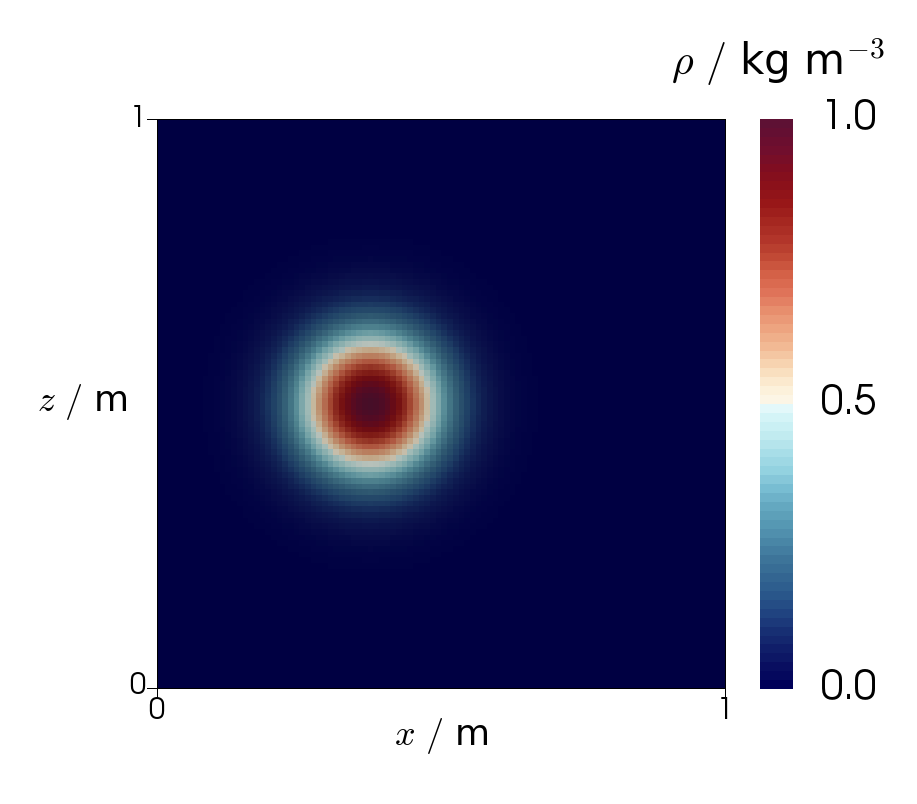}
\end{subfigure}
~~
\begin{subfigure}{.48\textwidth}
\centering
\includegraphics[width=\textwidth]{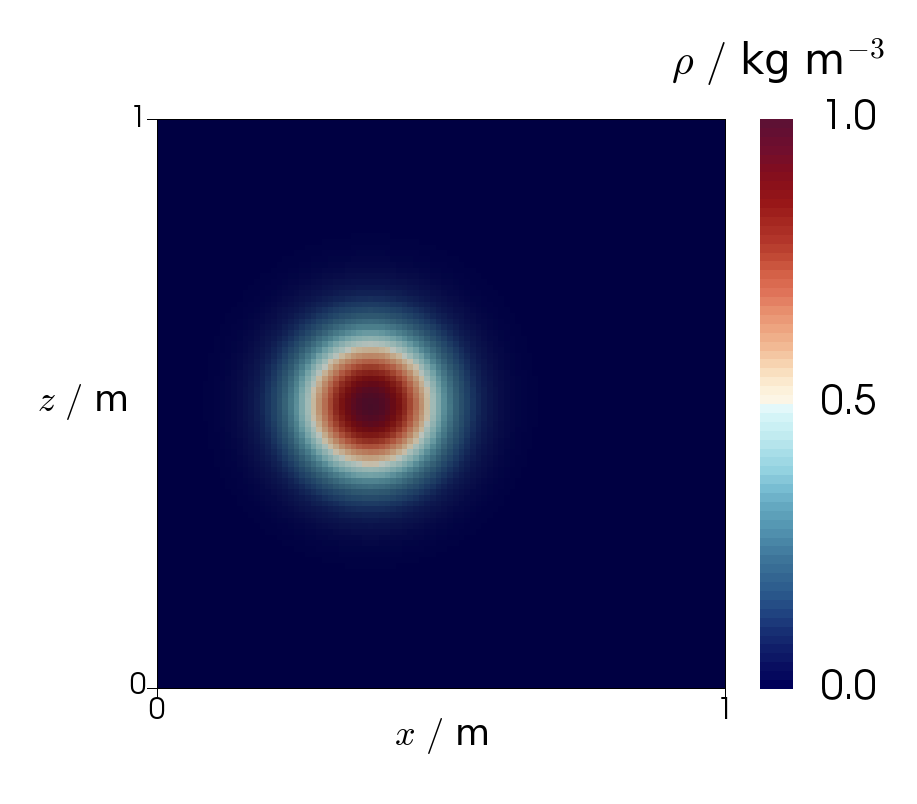}
\end{subfigure}
\caption*{\small{\newB{\textbf{Figure 2}:
The initial (left) and final (right) fields in the $\mathrm{DG}_0\times\mathrm{DG}_0$ space from the rotational convergence
test of Section \ref{sec:first convergence}, showing the field labelled
$\rho$ in Figure 4 (left) at the lowest resolution ($\Delta x=0.01$ m).
Almost no difference is visible between the fields.}
}}
\end{figure}
\subsubsection{Deformational Convergence Test}\label{sec:second convergence}
The second test is a more challenging convergence test,
based on the deformational flow
experiment described in \cite{cotter2016embedded}.
The initial profiles were the same as used in the rotational
advection test, but with $x_0=0.5$ m.
The advecting velocity was that of \cite{cotter2016embedded}:
\begin{equation}
\bm{v}(\bm{x}, t) = 
\begin{pmatrix}
1 - 5(0.5 -t)\sin(2\pi(x-t))\cos(\pi z) \\
5(0.5 -t)\cos(2\pi(x-t))\sin(\pi z)
\end{pmatrix}.
\end{equation}
Figure 4 (right)
plots the results of this test, with each variable measuring
second order numerical accuracy.
\newB{Initial and final fields for the density in the lowest resolution run ($\Delta x = 0.01$ m)
are displayed in Figure 3.}
\begin{figure}[h!]
\centering
\begin{subfigure}{.48\textwidth}
\centering
\includegraphics[width=\textwidth]{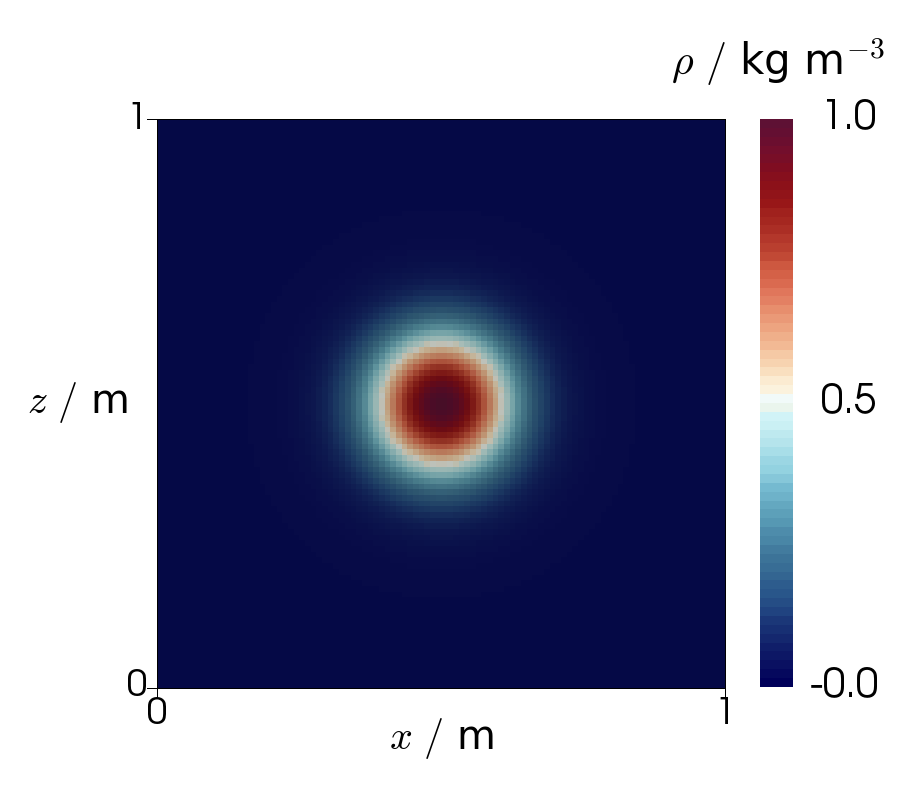}
\end{subfigure}
~~
\begin{subfigure}{.48\textwidth}
\centering
\includegraphics[width=\textwidth]{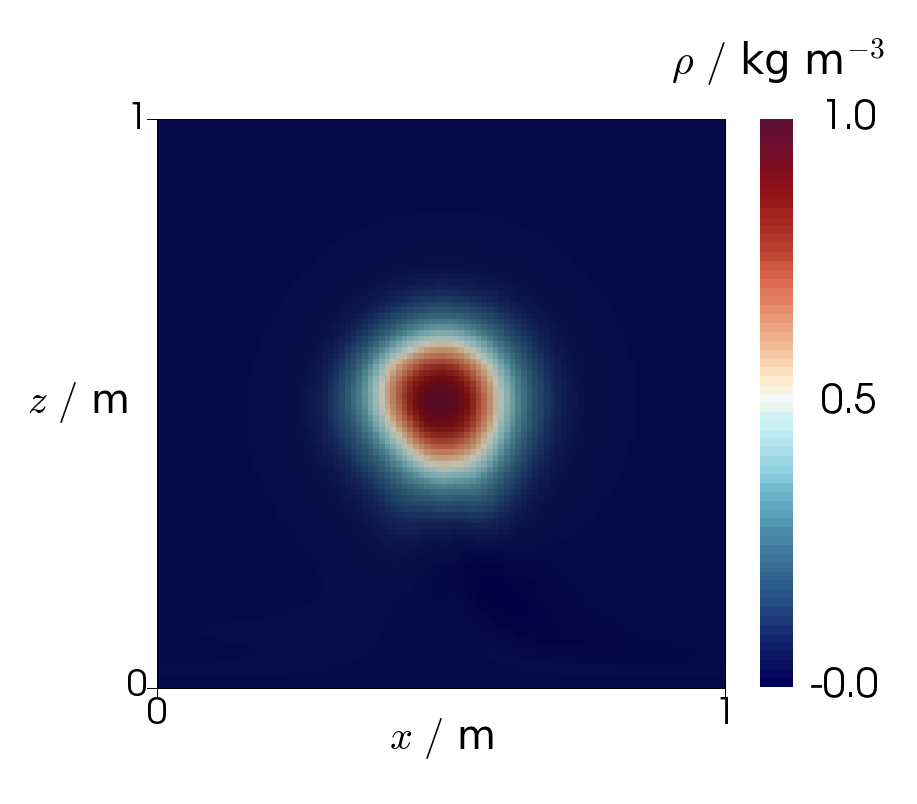}
\end{subfigure}
\caption*{\small{\newB{\textbf{Figure 3}:
The initial (left) and final (right) fields in the $\mathrm{DG}_0\times\mathrm{DG}_0$ space from the deformational convergence
test of Section \ref{sec:second convergence}, showing the field labelled
$\rho$ in Figure 4 (right) at the lowest resolution ($\Delta x=0.01$ m).}
}}
\end{figure}
\begin{figure}[h!]
\centering
\begin{subfigure}{.48\textwidth}
\centering
\includegraphics[width=\textwidth]{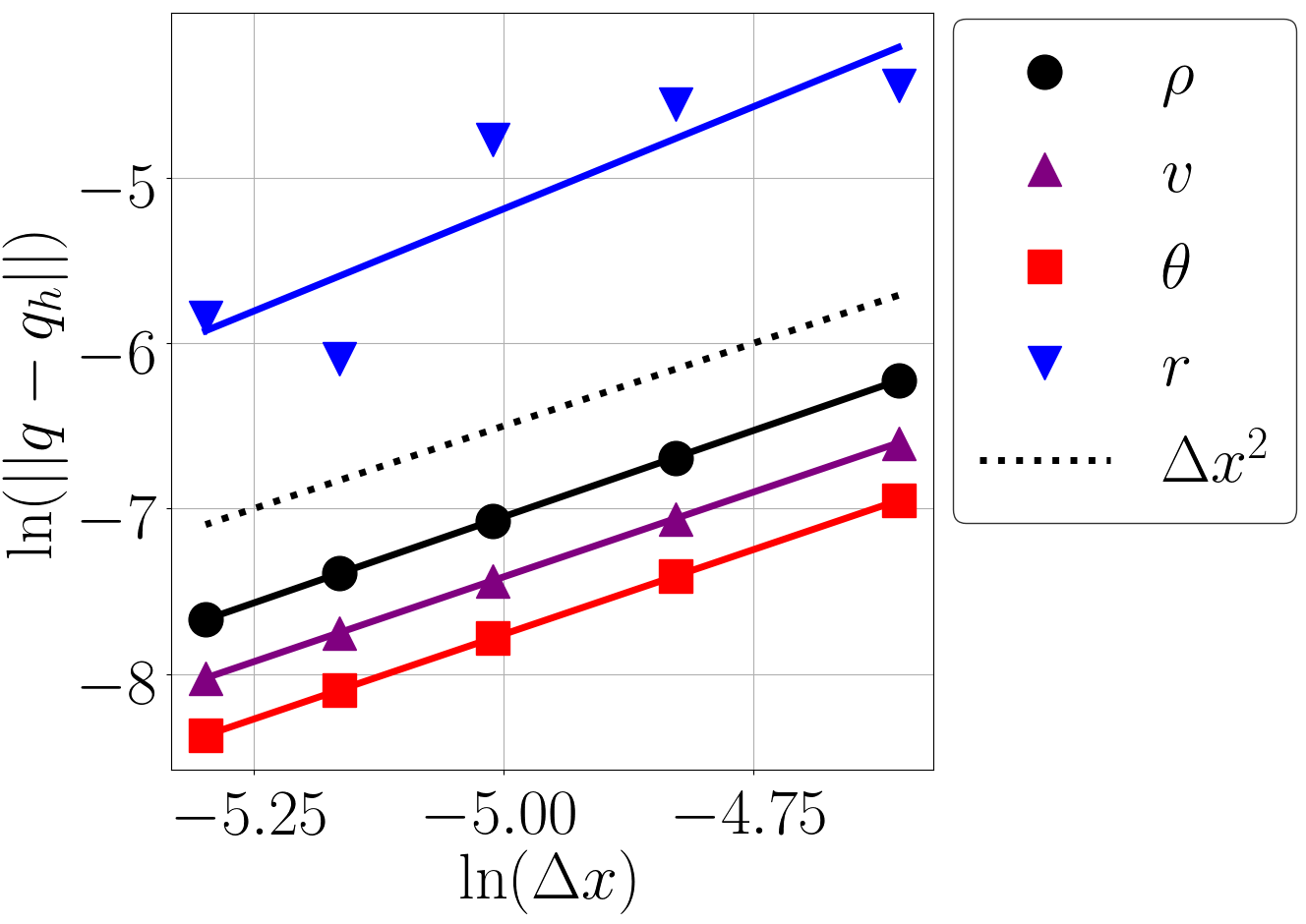}
\end{subfigure}
~~
\begin{subfigure}{.48\textwidth}
\centering
\includegraphics[width=\textwidth]{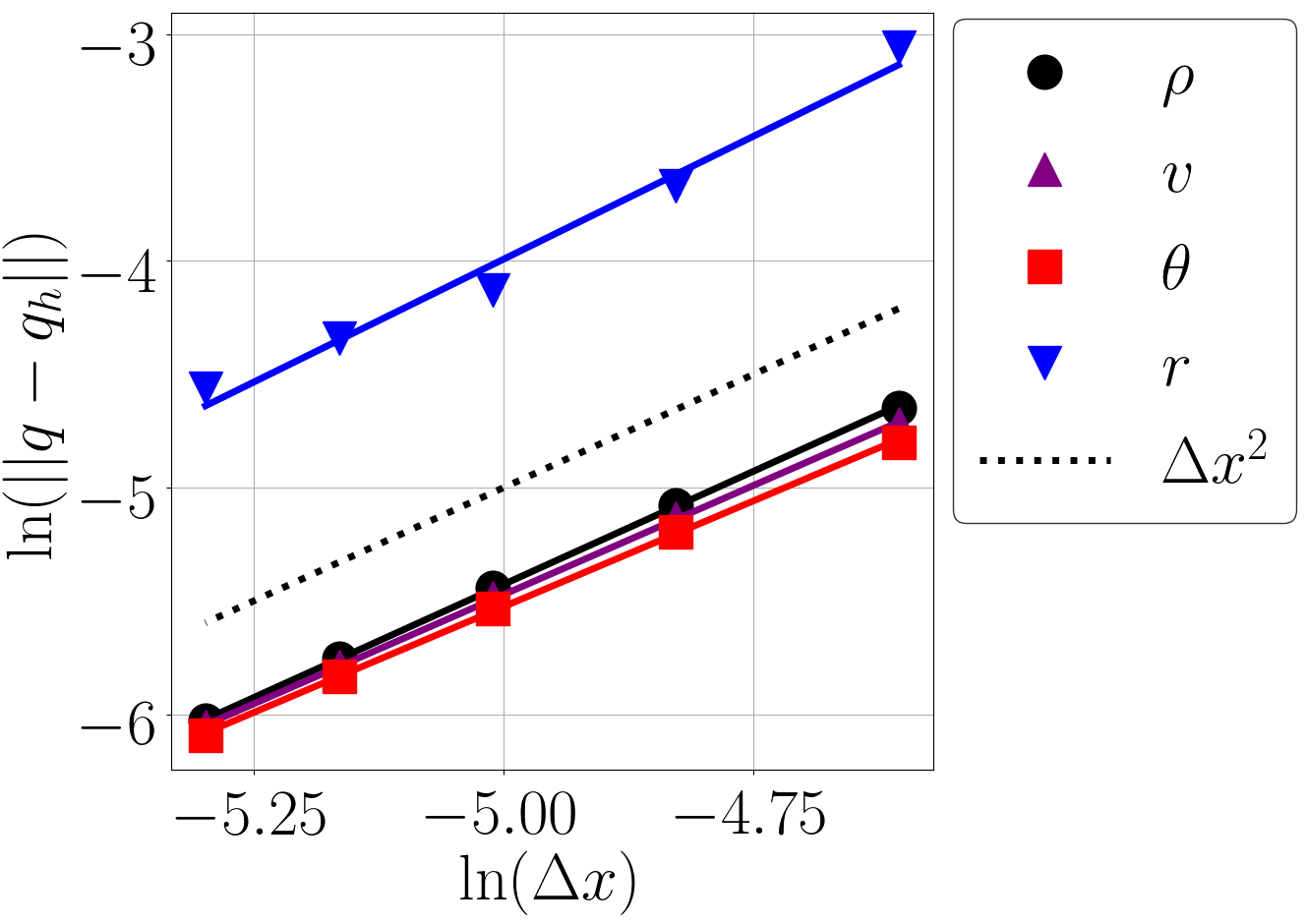}
\end{subfigure}
\caption*{\small{\textbf{Figure 4}:
Results from convergence tests for the recovered space scheme,
plotting, as a function of grid spacing $\Delta x$, the error in 
an advected solution $q$ against the true solution $q_h$.
\newR{The different lines labelled $\rho$, $v$ and $\theta$ represent performing the test in each of the different sets of spaces laid out in Section \ref{sec: example spaces} and using the projection operator $\mathcal{P}_A$, whilst the line labelled $r$ represents advection with the same spaces as $\theta$, but with the projection operator $\mathcal{P}_B$.}
(Left) The test represents a rigid body rotation.
In all cases, the \newR{slopes} are around $2$, indicating
second order numerical accuracy.
(Right) A more difficult convergence test featuring deformational
flow. Accuracy is approaching second order for each case.
}}
\end{figure}
\subsubsection{Boundary Convergence Test} \label{sec:third convergence}
\begin{figure}[h!]
\centering
\begin{subfigure}{.48\textwidth}
\centering
\includegraphics[width=\textwidth]{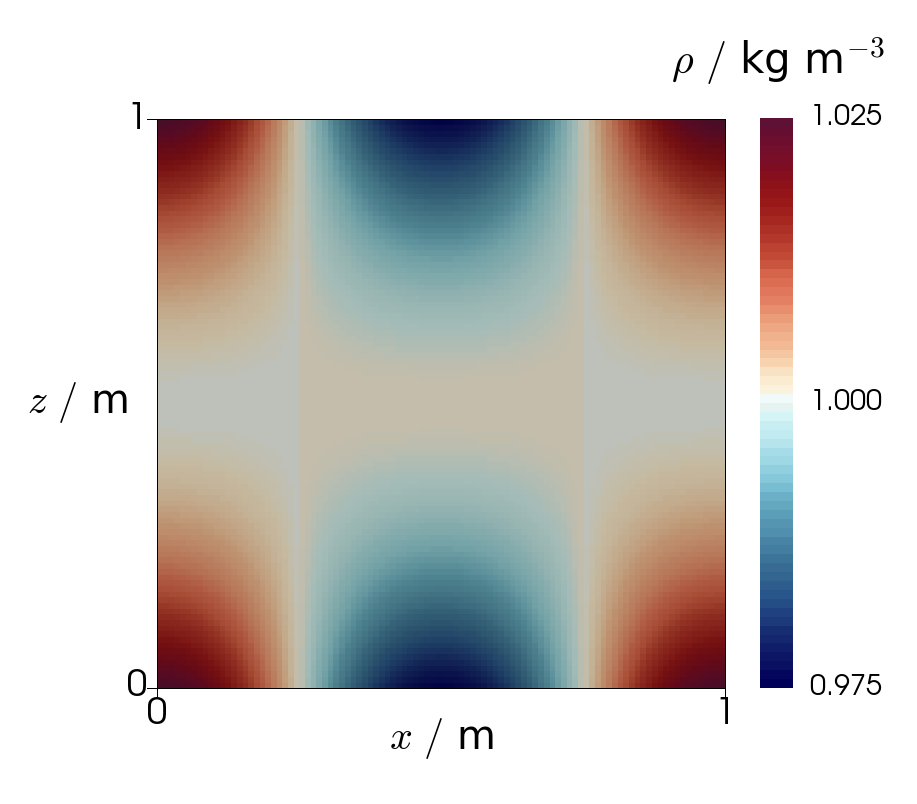}
\end{subfigure}
~~
\begin{subfigure}{.48\textwidth}
\centering
\includegraphics[width=\textwidth]{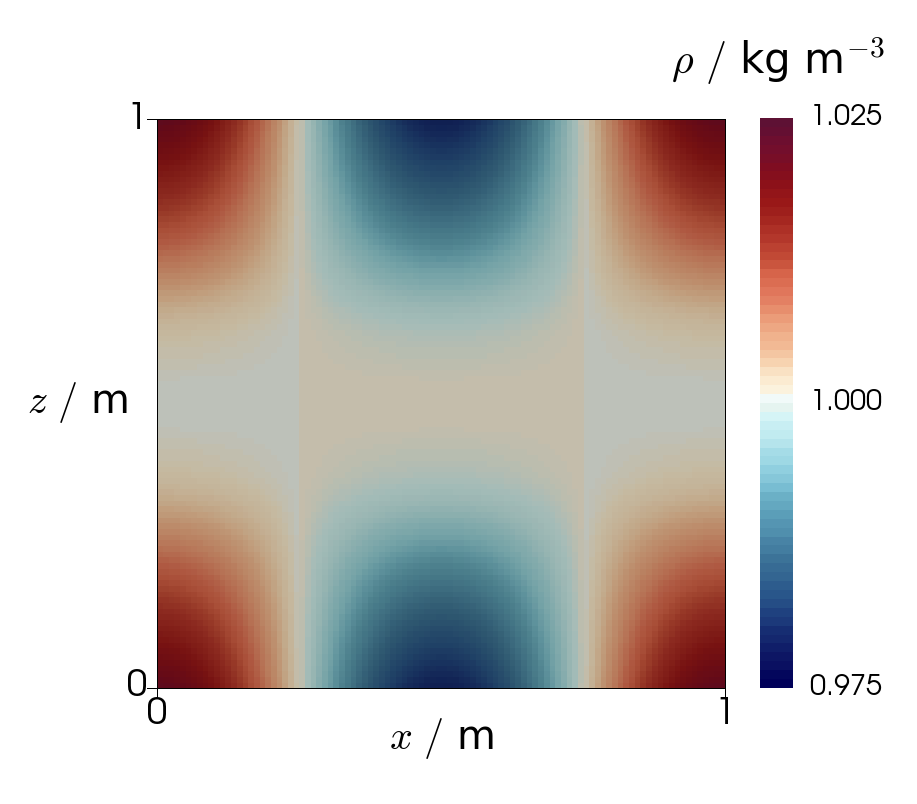}
\end{subfigure}
\caption*{\small{\newB{\textbf{Figure 5}:
Initial (left) and final (right) fields in the $\mathrm{DG}_0\times\mathrm{DG}_0$ space of the boundary 
convergence test of Section \ref{sec:third convergence}, showing the field labelled
$\rho$ in Figure 7 (left) at the lowest resolution ($\Delta x=0.01$ m).}
}}
\end{figure}
The third test was intended to investigate the integrity
of the advection scheme at the boundaries of the domain.
We use the following reversible flow, which squashes the
advected material into the boundary before recovering it:
\begin{equation}
\bm{v}(\bm{x}, t) = \left \lbrace
\begin{matrix}
(1,  - \sin(2\pi z)) & t < 0.5 \\
(1, \sin(2\pi z)) & t \geq 0.5
\end{matrix} \right. .
\end{equation}
The initial condition was
\begin{equation}
q = 1 + \frac{1}{10}\left(z - \frac{1}{2}\right)^2
\cos(2\pi x).
\end{equation}
To see the effect of the extra recovery performed at the 
boundary described in Section \ref{sec:recovery operator}, this extra recovery
was turned off for the variables labelled with an asterisk.
The results in Figure 7 (left) demonstrate that without doing extra
recovery at the boundaries, the whole recovery process does
not have second order numerical accuracy.
\newB{Initial and final fields for the density in the lowest resolution run ($\Delta x = 0.01$ m)
are displayed in Figure 5.}
\subsubsection{Spherical Convergence Test} \label{sec:last convergence}
The final convergence test was performed
on the surface of a sphere.
In this case we used a cubed sphere mesh of a sphere of radius
100 m.
The advecting velocity field used was $\bm{v}=U\sin\lambda$,
for latitude $\lambda$ and $U=\pi / 10$ m s$^{-1}$,
which gave a constant zonal rotation rate about the sphere.
We took time steps of $\Delta t=0.5$ s
up to a total time of $2000$ s
so that the initial profile should be equal to the 
`true' profile.
The initial profile that we used was very similar to that
used in the first test case of \cite{williamson1992standard}:
\begin{equation}
q = \left\lbrace
\begin{matrix}
\frac{1}{2}\left[1 + \cos\left(\frac{\pi r}{R}\right) \right],
& r<R, \\
0, & \mathrm{otherwise}, 
\end{matrix} \right.
\end{equation}
where $R=100/3$ m and for latitude $\lambda$ and longitude 
$\varphi$ with $\lambda_0=0$ and $\varphi_0=-\pi/2$, 
and where $r$ is now given by
\begin{equation}
r = 100 \ \mathrm{m} \times \cos^{-1}\left[\sin\lambda_0\sin\lambda
+\cos\lambda_c\cos\lambda\cos(\varphi-\varphi_0)\right].
\end{equation}
The errors of this test as a function of resolution are plotted
in Figure 7 (right).
This also appears to show second order accuracy.
We found that at lower resolutions, the errors due to the 
advective scheme were obscured by those from the imperfect 
discretisation of the surface of the sphere.
\newB{The initial and final fields of this test are plotted for the coarsest resolution in Figure 6.}
\begin{figure}[h!]
\centering
\begin{subfigure}{.48\textwidth}
\centering
\includegraphics[width=\textwidth]{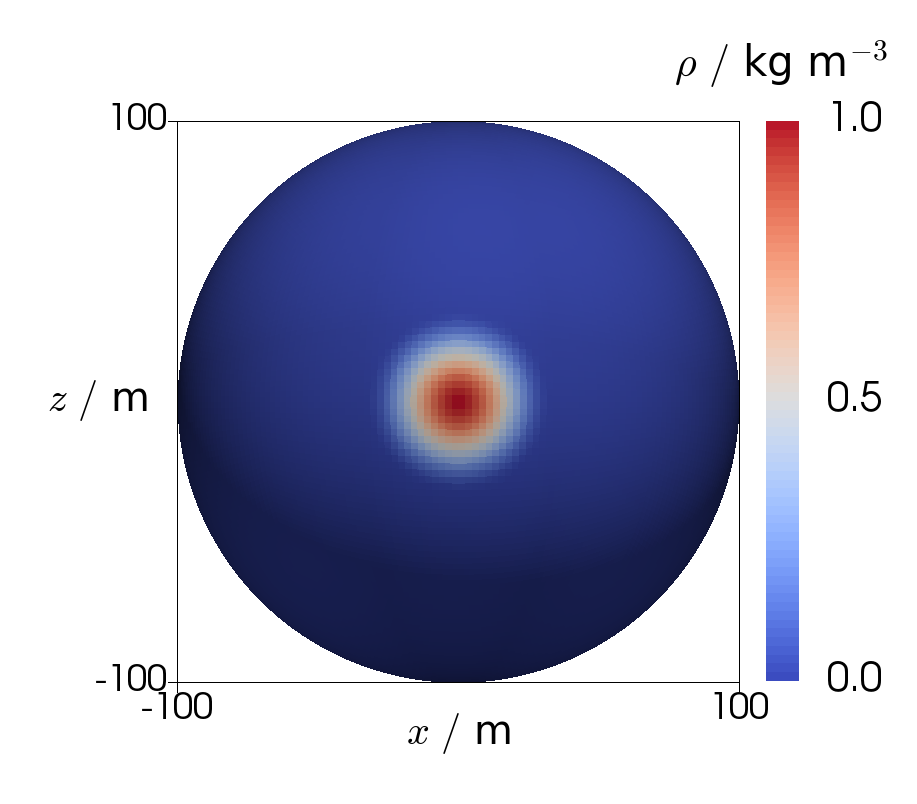}
\end{subfigure}
~~
\begin{subfigure}{.48\textwidth}
\centering
\includegraphics[width=\textwidth]{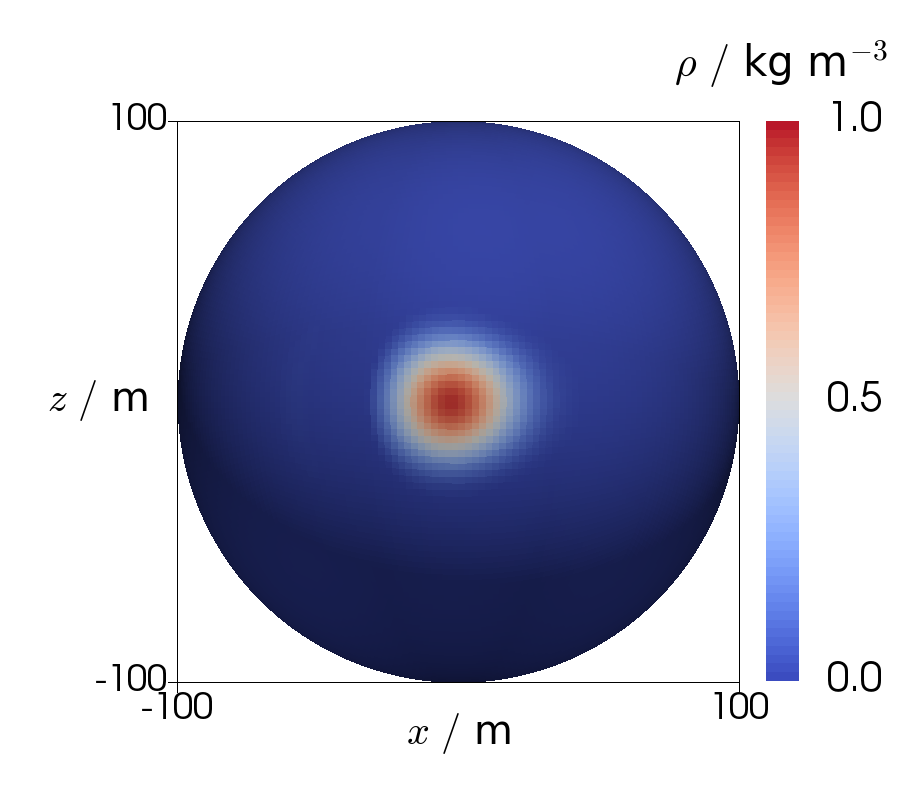}
\end{subfigure}
\caption*{\small{\newB{\textbf{Figure 6}:
The initial (left) and final (right) fields in the $\mathrm{DG}_0\times\mathrm{DG}_0$ space from the spherical 
convergence test of Section \ref{sec:second convergence}, showing the field labelled
$\rho$ in Figure 7 (right) at the lowest resolution (the sixth refinement level of the cubed sphere, with $\Delta x\approx 1.6$ m).}
}}
\end{figure}
\begin{figure}[h!]
\centering
\begin{subfigure}{.48\textwidth}
\centering
\includegraphics[width=\textwidth]{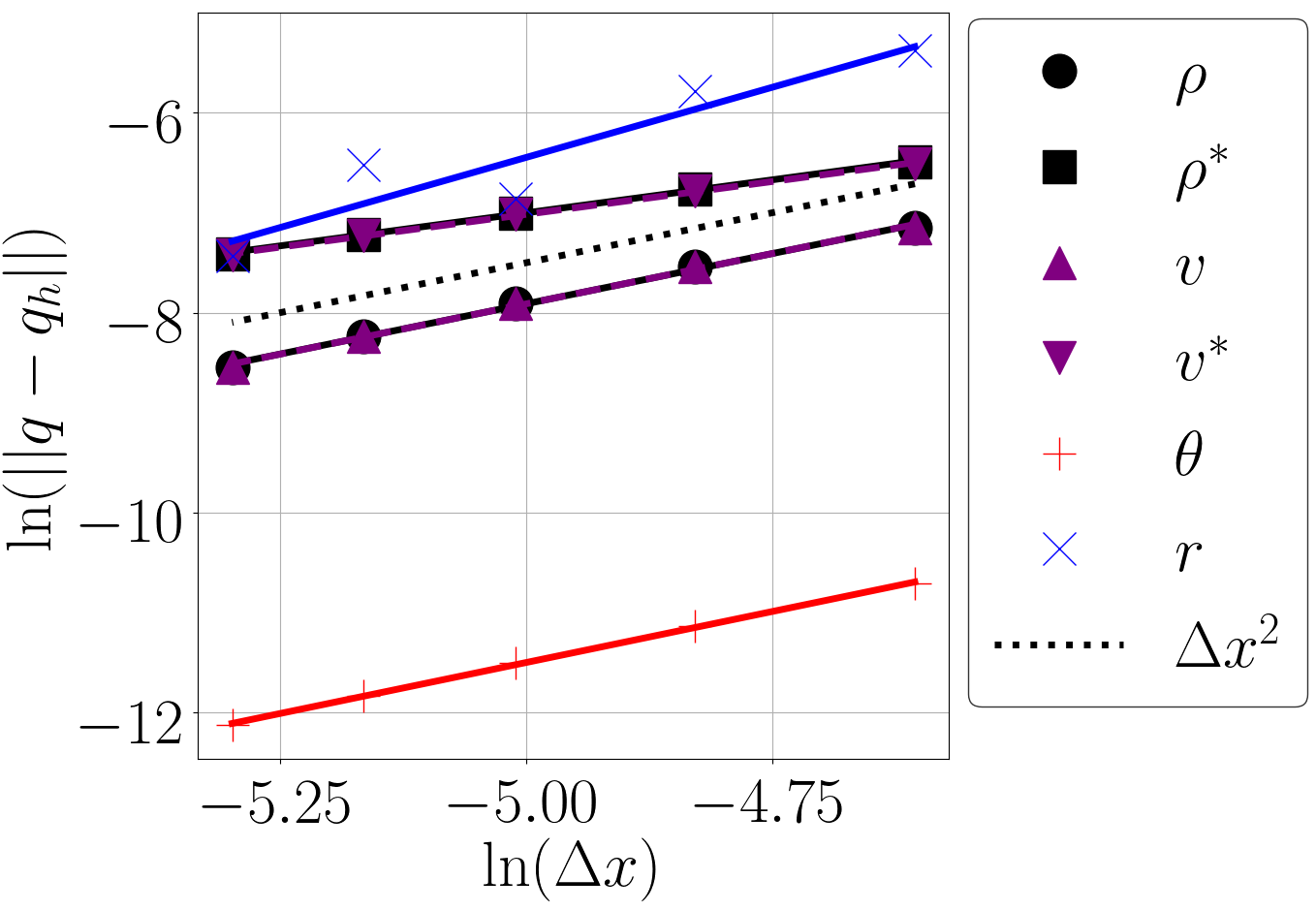}
\end{subfigure}
~~
\begin{subfigure}{.48\textwidth}
\centering
\includegraphics[width=\textwidth]{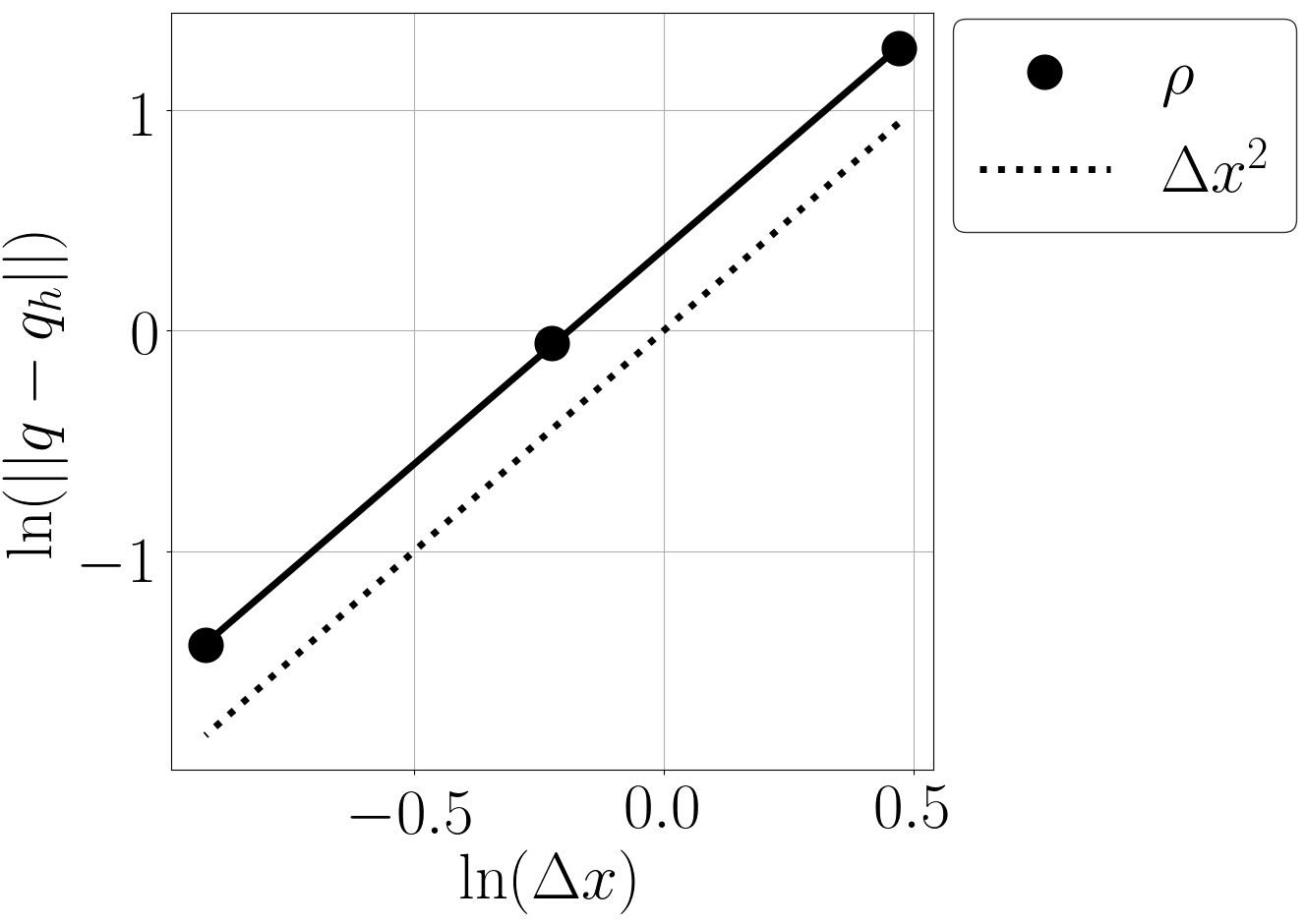}
\end{subfigure}
\caption*{\small{\textbf{Figure 7}:
More results from convergence tests for the recovered space 
scheme, plotting, as a function of grid spacing $\Delta x$, the error in 
an advected solution $q$ against the true solution $q_h$.
(Left) A test demonstrating the need for the extra recovery
at the boundaries,  by comparing the scheme with and without
this extra recovery process.
The schemes without extra recovery at the boundaries are denoted
with an asterisk.
They not only display a larger error, but also show lower accuracy.
\newB{As $\theta$ and $r$ are linear in the vertical, they are accurately represented at the boundary by the recovery scheme without performing any additional recovery at the boundary. 
However if rigid walls were present on the side of the domain, these fields would require additional recovery at these boundaries.}
(Right) The test performed on a cubed sphere mesh.
The \newR{slope} here is very close to $2$, again supporting the claim that the advection scheme has second order numerical accuracy.
}}
\end{figure}

\subsection{Stability}
We tested the formulas (\ref{eqn:A_k1}), (\ref{eqn:A_k2})
and (\ref{eqn:A_k3})
by advecting sine and cosine waves for each of the cases defined.
The domain used was a square vertical slice of length 120 m
with grid spacing $\Delta x=1$ m.
The amplification factor for a given wavenumber $k$ and Courant
number $c$ was measured by advecting a sine and cosine wave
of wavenumber $k$ by a constant horizontal velocity $c$
for a single time step of $\Delta t=1$ s.
As before, the domain had periodic boundary conditions on the
vertical walls.
The amplification factor was then found by measuring the
amplitude of the sine and cosine components after the first time
step.
This was done for several values of $c$.\\
\\
The measured values are compared with those from the formula
in Figure 8
which shows agreement for each of the cases considered in
Section \ref{sec: von neumann}.

\begin{figure}[h!]
\centering
\includegraphics[width=\textwidth]{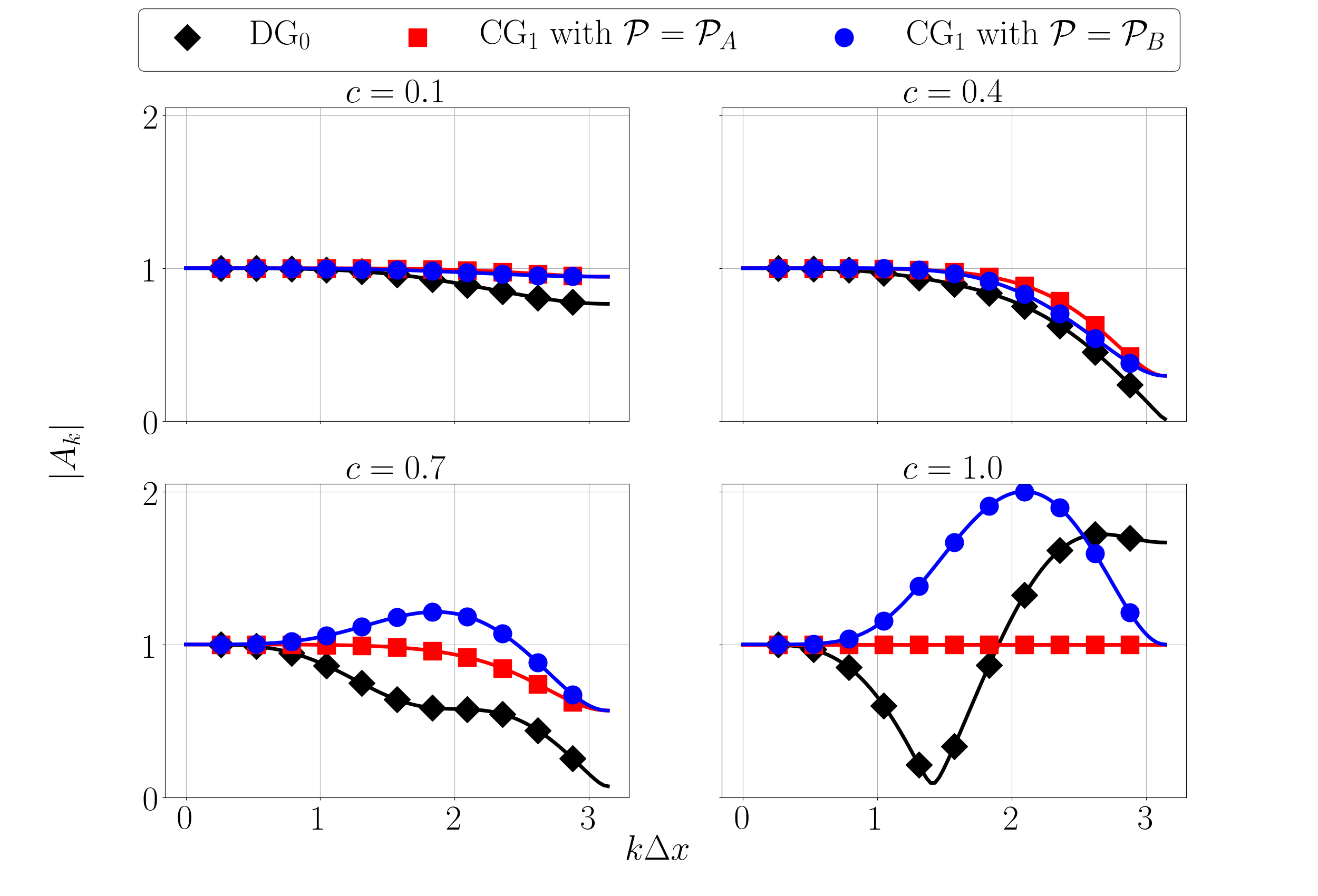}
\caption*{\small{\textbf{Figure 8}:
The results from testing the validity of the expressions
(\ref{eqn:A_k1}), (\ref{eqn:A_k2}) and (\ref{eqn:A_k3})
for the amplification factors in the 1D advection cases
presented in Section \ref{sec: von neumann}.
The markers denote measurements of the amplification factor
by advecting sine and cosine wave profiles,
whilst the lines plot the expressions derived in
Section \ref{sec: von neumann}.
All plots show agreement between the expressions and the measured
amplification factors. \newR{These results also agree with the critical Courant numbers found in Table 2, including for $c=0.4$ in which the values are all below unity for the $\mathrm{CG}_1$ case with $\mathcal{P}_A$, but with some values above unity for the $\mathcal{P}_B$ case.} 
}}
\end{figure}

\subsection{Limiting}
\begin{figure}[h!]
\centering
\begin{subfigure}{0.48\textwidth}
\centering
\includegraphics[width=\textwidth]{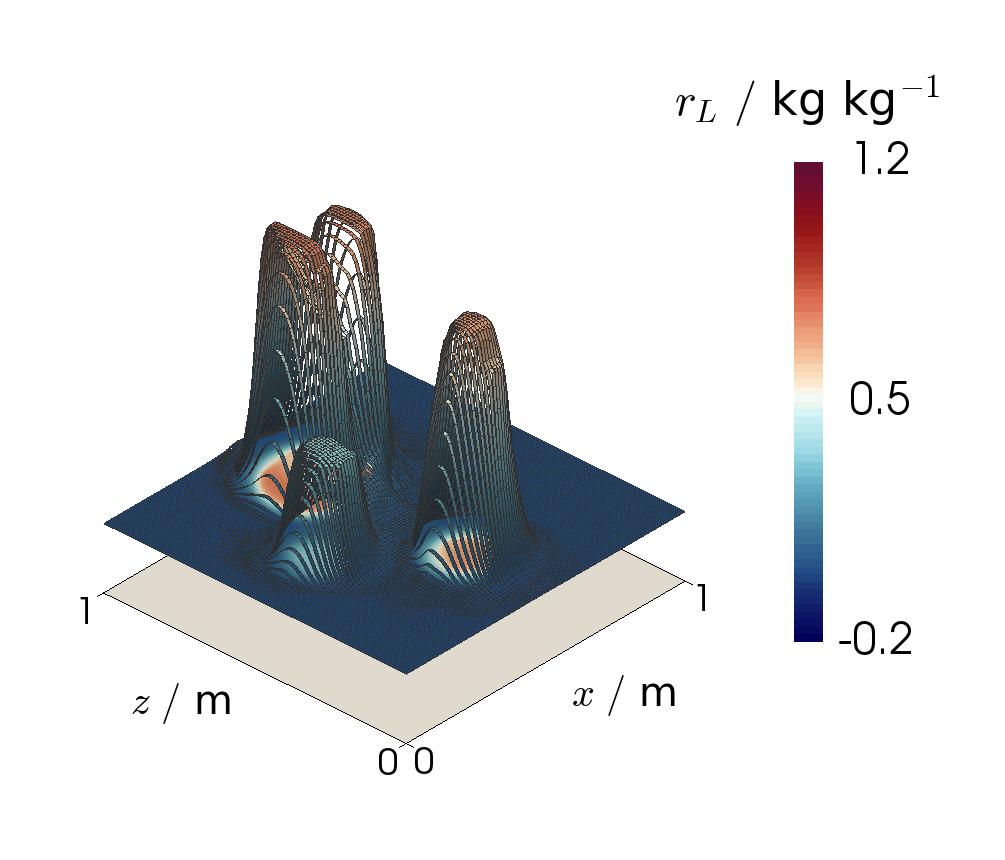}
\end{subfigure}
\begin{subfigure}{0.48\textwidth}
\centering
\includegraphics[width=\textwidth]{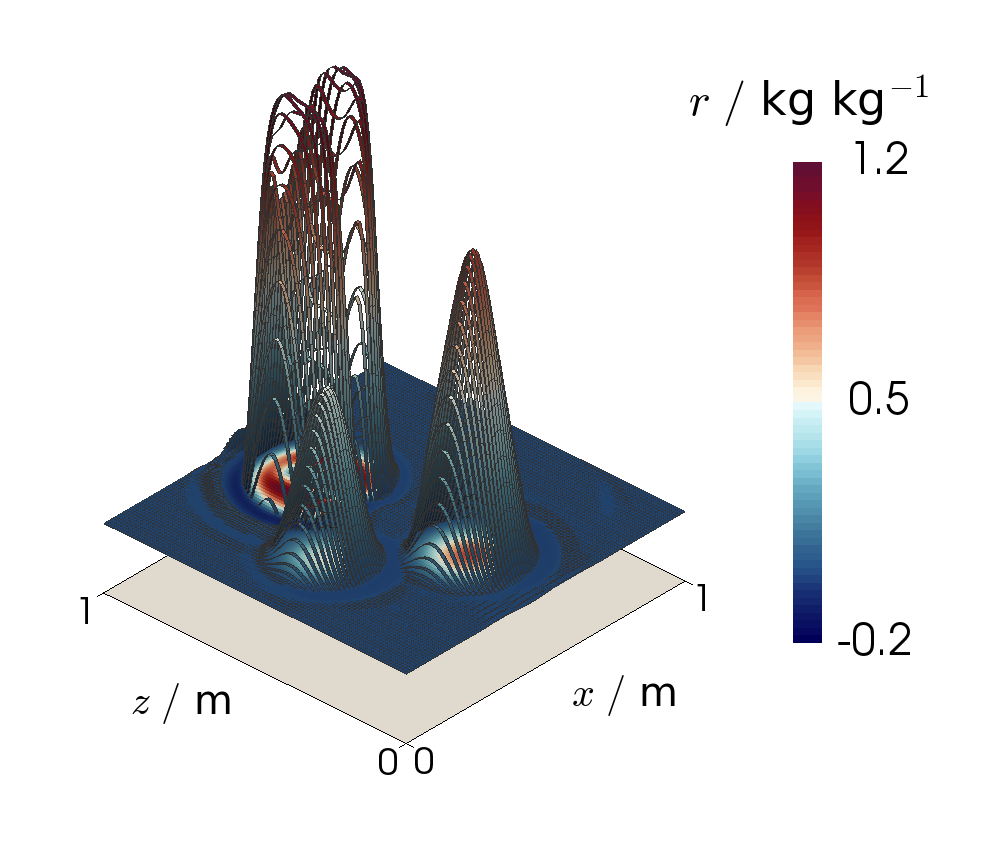}
\end{subfigure}
\begin{subfigure}{0.48\textwidth}
\centering
\includegraphics[width=\textwidth]{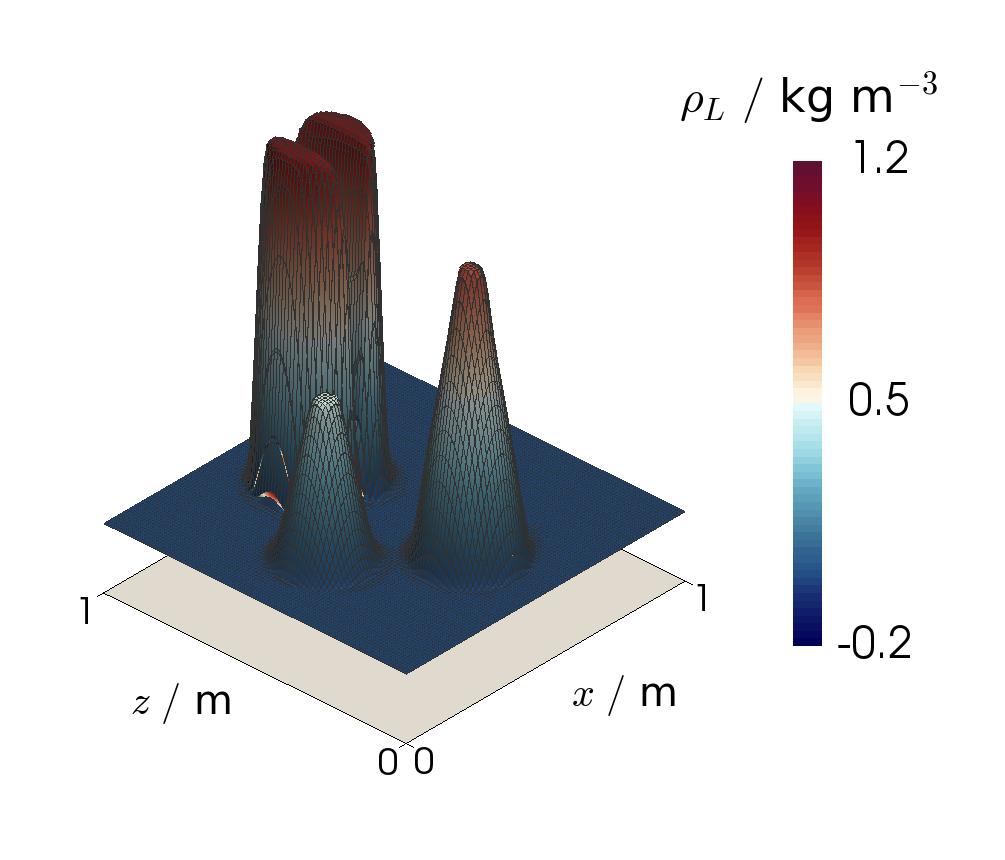}
\end{subfigure}
\begin{subfigure}{0.48\textwidth}
\centering
\includegraphics[width=\textwidth]{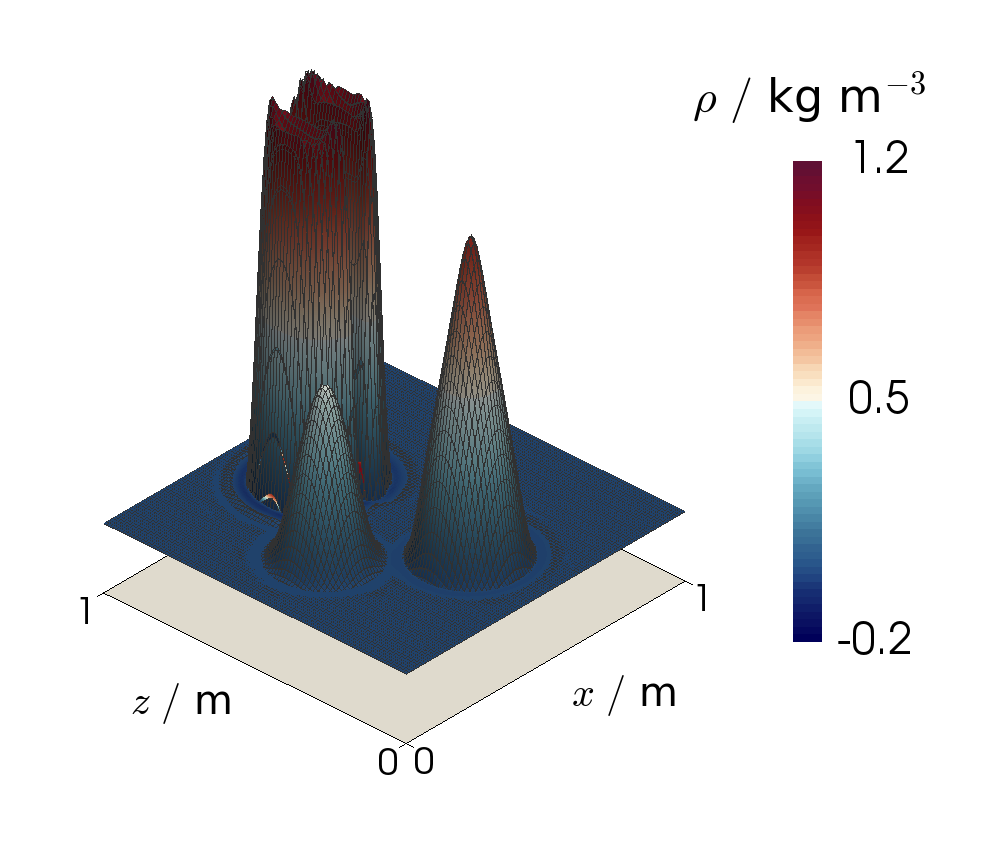}
\end{subfigure}
\caption*{\small{\textbf{Figure 9}:
The resulting fields from one revolution of the solid-body
rotation case of \cite{leveque1996high}.
\newB{(Top left) A hypothetical moisture field in the
$\mathrm{DG}_0\times\mathrm{CG}_1$ space, advected using the
limited `recovered' space scheme, compared with (top right) the same field advected using the non-limited scheme.
Although overshoots and undershoots are prevented, conservation of mass is compromised.
(Bottom left) A density field in the $\mathrm{DG}_1\times\mathrm{DG}_1$
space, using the same advection operator $\mathcal{A}$ as
in the `recovered' scheme and limited by the vertex-based limiter
of \cite{kuzmin2010vertex}, with (bottom right) the same solution but without a limiter applied.
This shows the effectiveness of the limiter in preventing overshoots and undershoots.}
}}
\end{figure}
\noindent
The efficacy of the limiting scheme was tested by using the
LeVeque slotted-cylinder, hump, cone set-up originally
defined in \cite{leveque1996high}
and used in both
\cite{kuzmin2010vertex} and \cite{cotter2016embedded}.
The advected field was initialised lying with this condition,
lying in the $\mathrm{DG}_0\times\mathrm{CG}_1$ space to
mimic moisture variables, before a
solid-body rotation was completed.
This was performed for the bounded case of the scheme defined
in Section \ref{sec:scheme}, using the projection operator
$\mathcal{P}_B$ and the vertex-based limiter of 
\cite{kuzmin2010vertex} for the advection.
The resulting field is shown in Figure 9
where it is also compared to the rotation of a field in
the $\mathrm{DG}_1\times\mathrm{DG}_1$ space, using the
same limited advection scheme, but without the `recovered' parts
of the scheme.
The field does indeed remain bounded, suggesting that the limiter has worked well.

\section{Compressible Euler Model}\label{sec: compressible model}
We have used this advection scheme in a numerical model
of the compressible Euler equations.
The continuous equations we used are
\begin{align}
\pfrac{\bm{v}}{t}+(\bm{v}\bm{\cdot}\grad{})\bm{v} + c_p \theta
\grad{\Pi} - \bm{g} = \bm{0} , \\
\pfrac{\rho}{t} + \grad{}\bm{\cdot}(\rho \bm{v}) = 0 , \\
\pfrac{\theta}{t} + \bm{v}\bm{\cdot}\grad{\theta} = 0 , \\
\Pi = \left(\frac{R\theta\rho}{p_0}\right)^
{\frac{\kappa}{1-\kappa}}
\end{align}
where $\bm{g} = -g\hat{\bm{k}}$ with 
$g=9.81$ m s$^{-2}$ is the uniform gravitational
acceleration towards the Earth's surface, 
$c_p = 1004.5$ J kg$^{-1}$ K$^{-1}$ is the specific heat capacity 
at constant pressure of a dry ideal gas,
$R=287$ J kg$^{-1}$ K$^{-1}$ is the specific gas constant of a
dry ideal gas, $\kappa = R/c_p$
and $\Pi$ is the Exner pressure, found at reference pressure
$p_0=1000$ hPa.\\
\\
The general strategy to solve these equations is based
upon that used in the UK Met Office Endgame model,
and is very similar to that described in
\cite{yamazaki2017vertical}, which discretised the Boussinesq
equations. \\
\\
The overall structure of our model
can be described by the performance of various operations
to the state variables, which we denote together by
$\bm{\chi} = (\bm{v}, \rho, \theta)$.
In a time step, the first stage is to apply a `forcing' 
$\mathcal{F}(\bm{\chi})$ to $\bm{\chi}$, the details of which are explained
below.
At this point the algorithm enters an outer iterative loop, in which
an advecting velocity $\bar{\bm{u}}$ is determined, and an 
advection step is performed by an advection operator 
$\mathcal{V}_{\bar{\bm{u}}}$.
There is then an inner loop, in which the forcing is reapplied
and a residual is calculated between the newly forced state
and the best estimate for the state at the next time step.
The state is corrected by solving a linear problem 
(here we denote the linear operator by $\mathcal{S}$)
for the residual.
The scheme is balanced between being explicit and implicit
by the off-centering parameter $\alpha$, which we will take to be 1/2.
This process is summarised by the following pseudocode:
\begin{enumerate}[leftmargin=*]
\item FORCING:  $\bm{\chi}^*=\bm{\chi}^n+(1-\alpha)\Delta t\mathcal{F}(\bm{\chi}^n)$
\item SET:   $\bm{\chi}_p^{n+1}=\bm{\chi}^n$
\item OUTER:
	\begin{enumerate}
	\renewcommand{\labelitemi}{}
	\item UPDATE:   $\bar{\bm{u}}=\tfrac{1}{2}(\bm{v}_p^{n+1}+\bm{v}^n)$
	\item ADVECT: $\bm{\chi}_p=\mathcal{V}_{\bar{\bm{u}}}(\bm{\chi}^*)$
	\item INNER:
	\begin{enumerate}
	\item FIND RESIDUAL: $\bm{\chi}_\mathrm{rhs}= 
	\bm{\chi}_p+\alpha\Delta t \mathcal{F}(\bm{\chi}_p^{n+1})
	-\bm{\chi}_p^{n+1}	$
	\item SOLVE: $\mathcal{S}(\Delta \bm{\chi}) = 
	\bm{\chi}_\mathrm{rhs}$ for $\Delta\bm{\chi}$
	\item INCREMENT: $\bm{\chi}_p^{n+1}=\bm{\chi}_p^{n+1}+\Delta\bm{\chi}$
	\end{enumerate}
	\end{enumerate}
\item ADVANCE TIME STEP: $\bm{\chi}^n = \bm{\chi}^{n+1}_p$
\end{enumerate}
In our case, the forcing operator acts only upon the velocity.
It is the solution $\bm{v}_\mathrm{trial}$ to the following
problem:
\begin{equation}
\int_\Omega \bm{\psi}\bm{\cdot}\bm{v}_\mathrm{trial} \dx{x}
= \int_\Omega c_p\bm{\nabla}\bm{\cdot}(\theta\bm{\psi}) \dx{x}
- \int_\Gamma c_p \llbracket \theta \bm{\psi}\bm{\cdot}
\hat{\bm{n}} \rrbracket \langle \Pi \rangle \dx{S}
-\int_\Omega g \bm{\psi}\bm{\cdot}\hat{\bm{k}}\dx{x}, \ \ \ 
\forall \bm{\psi}\in V_{\bm{v}},
\end{equation}
where $\Omega$ is the domain, $\Gamma$ is the set of all interior facets, the angled brackets $\langle\cdot\rangle$ denote
the average value on either side of a facet and 
$V_{\bm{v}}$ is the function space 
in which the velocity field lies.\\
\\
The advection operators use the scheme defined in Section 
\ref{sec:scheme} with $\mathcal{A}$ as the simple upwinding
and three-step Runge-Kutta method described in 
Section \ref{sec: von neumann}.
The spaces used in the recovered scheme are listed in 
Section \ref{sec: example spaces}.
\newB{We do not use any limiting strategy for $\theta$, and use the projection operator $\mathcal{P}_A$ in all schemes.} \\
\\
Finally, the strategy for the linear solve step is to first
analytically eliminate $\theta$.
The resulting problem for $\bm{u}$ and $\rho$ defines
the operator $\mathcal{S}$, which we solve using a Schur complement
preconditioner.
Then $\theta$ is reconstructed from the result.
\subsection{Rising Bubble Test Case}
\begin{figure}[h!]
\centering
\begin{subfigure}{0.48\textwidth}
\centering
\includegraphics[width=\textwidth]{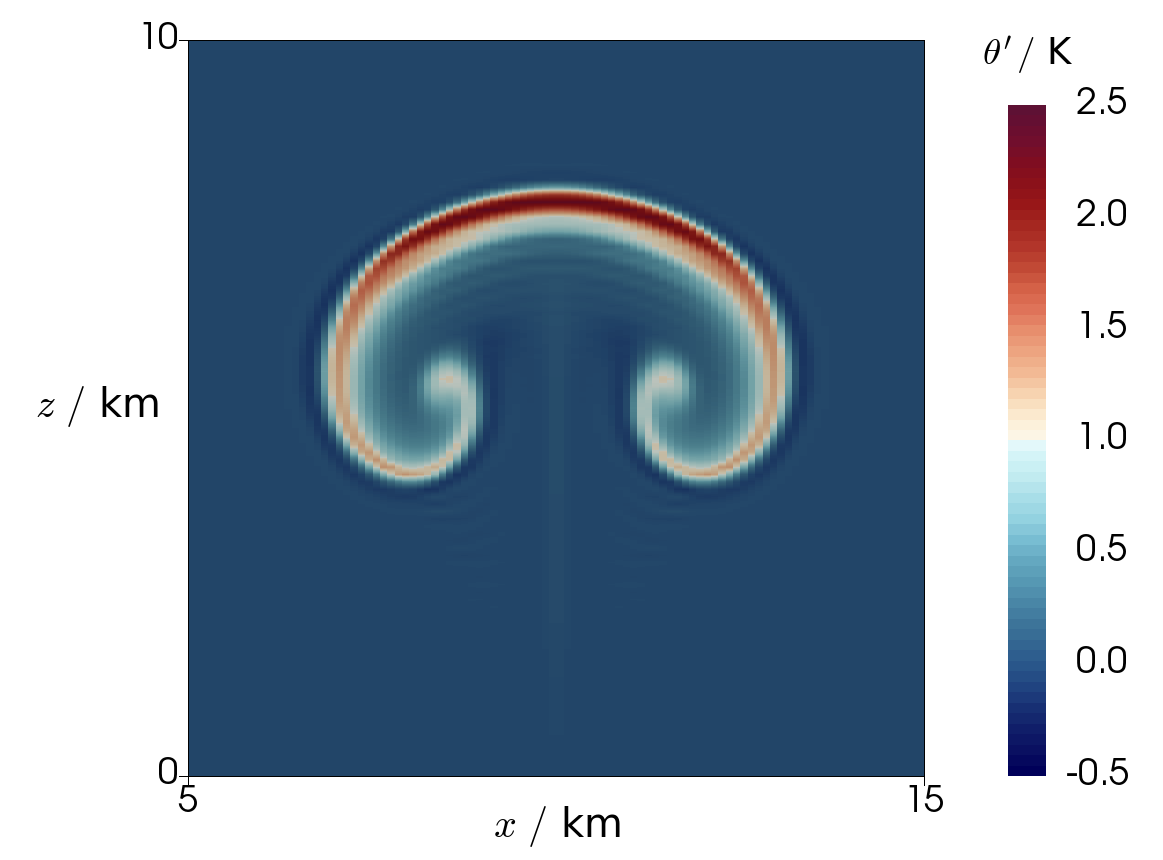}

\end{subfigure}
~~
\begin{subfigure}{0.48\textwidth}
\centering
\includegraphics[width=\textwidth]{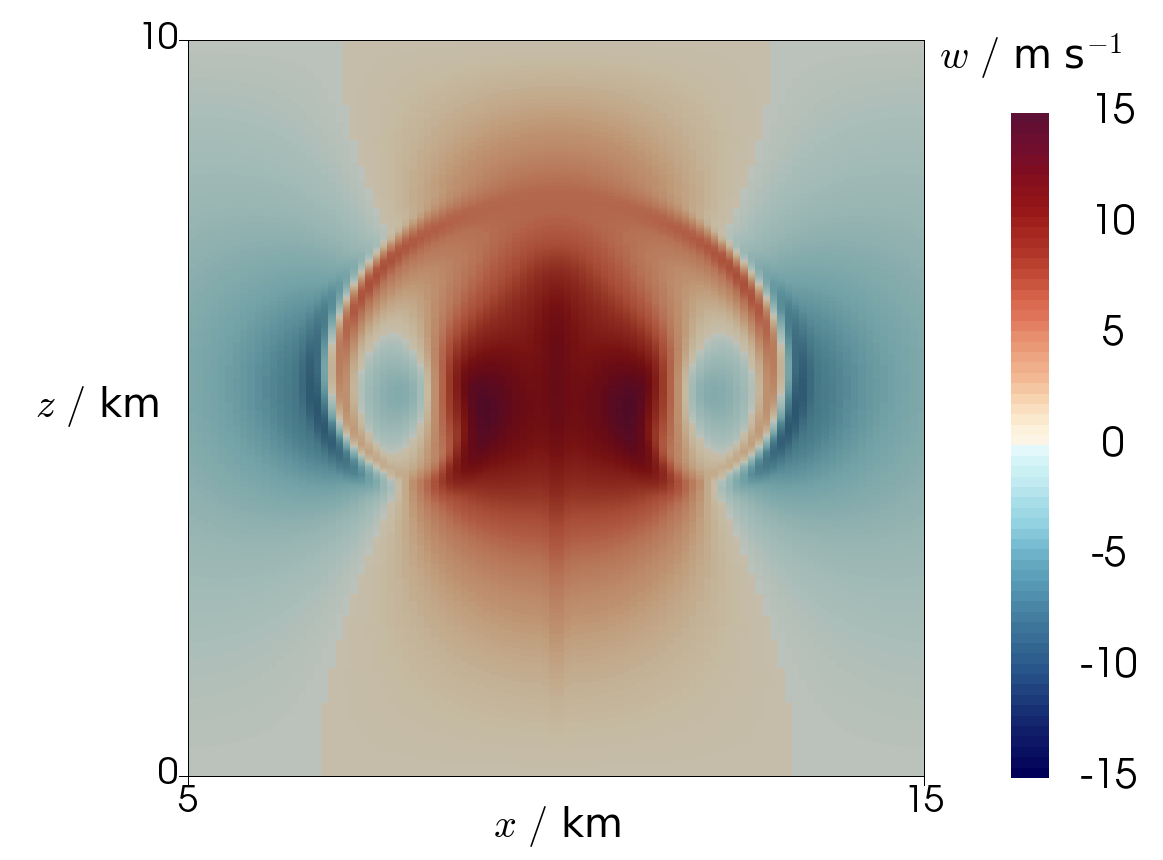}
\end{subfigure}
\caption*{\small{\textbf{Figure 10}:
Fields at $t=1000$ s from a run at resolution $\Delta x=100$ m of 
the dry bubble case from \cite{bryan2002benchmark}, 
representing a rising thermal.
We used the lowest-order family of spaces
with the `recovered' space scheme as the advection method.
(Left) the perturbation $\theta'$
to the background constant state of 300 K. (Right) The vertical
velocity field.
}}
\end{figure} \noindent
Here we show some results of using the recovered space 
advection scheme within a full model of the compressible Euler
equations in the context of a vertical slice model.
The example test case that we use is the dry bubble test of
\cite{bryan2002benchmark}.
The initial state is $\theta_b=300$ K and zero velocity everywhere,
while $\rho$ is determined via from solving for hydrostatic
balance using the procedure described in
\cite{natale2016compatible}.
The following perturbation to $\theta$ was then applied:
\begin{equation}
\theta' = \left\lbrace
\begin{matrix}
2 \cos^2\left(\pi r/(2 r_c)\right) \ \mathrm{K}, & r<r_c, \\
0, & r \geq r_c, 
\end{matrix} \right.
\end{equation}
so that $\theta = \theta_b + \theta'$, with 
\begin{equation}
r = \sqrt{(x-x_c)^2+(z-z_c)^2},
\end{equation}
where $x_c$ = 10 km, $z_c=r_c=$ 2 km.
In the model used in \cite{bryan2002benchmark}, 
the Exner pressure is a prognostic variable, rather than the
density $\rho$.
To ensure that our initial pressure is unchanged by the 
perturbation, we found the initial density state by
solving for $\rho_\mathrm{trial}$:
\begin{equation}
\int_\Omega \phi \rho_\mathrm{trial} \dx{x} =
\int_\Omega \phi \rho_b\theta_b / \theta \dx{x}, \ \ \ 
\forall \phi\in V_\rho.
\end{equation}
where $V_\rho$ is the function space that $\rho$ lives in, and
$\theta_b$ and $\rho_b$ are the hydrostatically balanced
background states.
The domain used had a width of 20 km and a height of 10 km.
The top and bottom boundaries had rigid lid boundary conditions
($\bm{v}\bm\cdot \hat{\bm{n}}=0$) whilst there were periodic
boundary conditions on the left and right sides.
The perturbed potential temperature field at the final time
$t=1000$ s is shown in Figure 10 for a simulation with 
grid spacing $\Delta x=100$ m and time steps of $\Delta t$ = 1 s.
\newR{Although there is good agreement between the fields shown in Figure 10 and those presented in \cite{bryan2002benchmark}, there are some visible errors in the solution: chiefly some ringing artifacts
surrounding the mushroom-shaped perturbation that develops in the $\theta$ field.
In the authors' experience, such numerical simulations of rising or sinking thermals can often exhibit instability at the leading edge of the moving bubble, capturing an inherent physical instability.
In this case, the underlying equations may have multiple solutions and the numerical solution may not converge to one as the resolution is refined.}
\section{Summary and Outlook}
\label{sec:outlook}
We have presented a new `recovered' advection scheme for use in numerical
weather prediction models.
This scheme is a form of the embedded DG advection described in
\cite{cotter2016embedded}, but in which higher-degree spaces
are recovered via averaging operators, as described in
\cite{georgoulis2018recovered}.
It is intended for use with the compatible finite element
set-up laid out in \cite{cotter2012mixed}, and in particular
can be used with the zeroth-degree set of spaces.
With these spaces, the scheme has second-order numerical 
accuracy.
We have also presented a bounded version of this scheme,
which can be used for moisture variables to preserve monotonicity
or to prevent negative values.
Stability properties of the scheme have also been provided.
\newB{Future work should extend this scheme to cover vector functions that lie in curved domains.
This will require careful averaging of vectors that lie in different tangent spaces.}

\section*{Acknowledgements}
TMB was supported by the EPSRC Mathematics of Planet Earth Centre for Doctoral Training at Imperial College London and the University of Reading.
CJC was supported by EPSRC grant EP/L000407/1, while JS was supported by the EPSRC EP/L000407/1 and NERC NE/R008795/1 grants.
The authors would like to thank the developers of the 
Firedrake software which was used extensively for this work, and in particular Thomas H. Gibson who wrote the original piece of code for the recovery operator.
\newP{The authors are also grateful to the two anonymous reviewers whose constructive feedback was useful in the revision of this paper.
Any mistakes, however, belong to the authors.}

\bibliography{recovered_space}
\bibliographystyle{ieeetr}

\end{document}